\newcommand\reallywidehat[1]{%
\savestack{\tmpbox}{\stretchto{%
  \scaleto{%
    \scalerel*[\widthof{\ensuremath{#1}}]{\kern-.6pt\bigwedge\kern-.6pt}%
    {\rule[-\textheight/2]{1ex}{\textheight}}%WIDTH-LIMITED BIG WEDGE
  }{\textheight}%
}{0.5ex}}%
\stackon[1pt]{#1}{\tmpbox}%
}
\newcommand\reallywidecheck[1]{%
\savestack{\tmpbox}{\stretchto{%
  \scaleto{%
    \scalerel*[\widthof{\ensuremath{#1}}]{\kern-.6pt\bigwedge\kern-.6pt}%
    {\rule[-\textheight/2]{1ex}{\textheight}}%WIDTH-LIMITED BIG WEDGE
  }{\textheight}%
}{0.5ex}}%
\stackon[1pt]{#1}{\scalebox{-1}{\tmpbox}}%
}
\numberwithin{equation}{section}
\newcommand{\dens}{\mbox{\rm dens}}
\newcommand{\RR}{{\mathbb R}}
\newcommand{\ZZ}{{\mathbb Z}}
\newcommand{\CC}{{\mathbb C}}
\newcommand{\XX}{\mathbb X}
\newcommand{\YY}{\mathbb Y}
\newcommand{\vol}{\mbox{vol}}
\newcommand{\cL}{{\mathcal L}}
\newcommand{\oplam}{\mbox{\Large $\curlywedge$}}
\newcommand{\FB}{{\mathcal FB}}
\newcommand{\cM}{{\mathcal M}}
\newcommand{\cA}{{\mathcal A}}
\newcommand{\cB}{{\mathcal B}}
\newcommand{\cC}{{\mathcal C}}
\newcommand{\dd}{\mbox{\rm d}}
\newcommand{\eps}{\varepsilon}
\newcommand{\Cu}{C_{\mathsf{u}}}
\newcommand{\Cc}{C_{\mathsf{c}}}
\newcommand{\WAP}{\mathcal{W}\hspace*{-1pt}\mathcal{AP}}
\newcommand{\SAP}{\mathcal{S}\hspace*{-2pt}\mathcal{AP}}
\newcommand{\MAP}{\mathcal{M}\hspace*{-1pt}\mathcal{AP}}
\newcommand{\Bap}{\mathcal{B}\hspace*{-1pt}{\mathsf{ap}}}
\newcommand{\Wap}{\mathcal{W}\hspace*{-1pt}{\mathsf{ap}}}
\newcommand{\bap}{Bap}
 \newtheorem{theorem}{Theorem}[section]
 \newtheorem{lemma}[theorem]{Lemma}
 \newtheorem{prop}[theorem]{Proposition}
 \newtheorem{coro}[theorem]{Corollary}
 \newtheorem{definition}[theorem]{Definition}
 \newtheorem{example}[theorem]{Example}
  \newtheorem{remark}[theorem]{Remark}
\newcommand{\ltebe}{\text{\textlquill}}
\newcommand{\rtebe}{\text{\textrquill}}
\newcommand{\rtebeA}{\text{\textrquill}_{\cA}}
\newcommand{\ebeA}{\circledast_{\cA}}
\begin{document}

\title[Orthogonality]{On the orthogonality of measures of different spectral type with
respect to twisted Eberlein convolution}

\author{Nicolae Strungaru}
\address{Department of Mathematical Sciences, MacEwan University \\
10700 -- 104 Avenue, Edmonton, AB, T5J 4S2\\
Phone: 780-633-3440 \\
and \\
Institute of Mathematics ``Simon Stoilow''\\
Bucharest, Romania}
\email{strungarun@macewan.ca}
\urladdr{http://academic.macewan.ca/strungarun/}

\begin{abstract} In this paper we show that under suitable conditions on their Fourier--Bohr coefficients, the twisted Eberlein convolution of a measure with pure point diffraction spectra and a measure with continuous diffraction spectra is zero. In particular, the diffraction spectrum of a linear combinations of the two measures is simply the linear combinations of the two diffraction spectra with absolute value square coefficients.
\end{abstract}

\keywords{Diffraction, Fourier transform, Fourier--Bohr Coefficients, twisted Eberlein convolution}

\subjclass[2010]{43A05, 43A25, 52C23, 78A45}

\maketitle

\section{Introduction}

Born from the discovery of quasicrystals \cite{She}, the field of Aperiodic Order is focused on the study of objects which show long range order, typically in the form of a large Bragg diffraction spectrum, but lack translational symmetry.

As introduced by Hof \cite{HOF3}, mathematical diffraction is defined as follows. Given an object, typically a point-set $\Lambda$ denoting the positions of atoms in an idealized solid, or more generally a translation bounded measure $\omega$, its autocorrelation $\gamma$ is defined as the vague limit
\begin{equation}\label{eq-ac}
\gamma= \lim_n \frac{1}{|A_n|} \omega_n*\widetilde{\omega_n}=: \ltebe \omega , \omega \rtebeA \,,
\end{equation}
where $A_n$ is a nice (van Hove) sequence and $\omega_n$ denotes the restriction of $\omega$ to $A_n$. Here, when $\Lambda$ is a pointset, we use $\omega= \delta_{\Lambda}:= \sum_{x \in \Lambda} \delta_x$ in \eqref{eq-ac} to define its autocorrelation. By eventually replacing $A_n$ by a subsequence, the autocorrelation can always be assumed to exist \cite{BL,LSS3}, and is positive definite. Therefore, its Fourier transform $\widehat{\gamma}$ exists and is a positive measure \cite{ARMA1,BF,MoSt}. The measure $\widehat{\gamma}$ is called the diffraction measure of $\Lambda$ and $\omega$, respectively. As any translation bounded measure on the second countable locally compact Abelian group(LCAG) $\widehat{G}$, it has a (unique) refined Lebesgue decomposition
$$
\widehat{\gamma}=(\widehat{\gamma})_{\mathsf{pp}}+\underbrace{(\widehat{\gamma})_{\mathsf{ac}}+(\widehat{\gamma})_{\mathsf{sc}}}_{(\widehat{\gamma})_{\mathsf{c}}}
$$
into a pure point measure $(\widehat{\gamma})_{\mathsf{pp}}$, a measure $(\widehat{\gamma})_{\mathsf{ac}}$ which is absolutely continuous with respect to the Haar measure $\theta_{\widehat{G}}$ and a measure $(\widehat{\gamma})_{\mathsf{sc}}$ which is continuous and singular with respect to the Haar measure $\theta_{\widehat{G}}$.

Systems with pure point spectrum, meaning $(\widehat{\gamma})_{\mathsf{c}}=0$, are now relatively well understood. Pure point diffraction was classified via dynamical spectra \cite{LMS,BL,Gou2,LS2}, via the almost periodicity of the autocorrelation measure \cite{ARMA,BM,Gou,MoSt}. More recently, generalizing previous work in this direction \cite{SOL,SOL2,BM,MS,Gou}, pure point diffraction was shown to be equivalent to mean almost periodicity of the underlying structure \cite{LSS,LSS2}.

\smallskip

The Eberlein decomposition of the autocorrelation measure plays an important role in the study of diffraction of structures. Indeed the autocorrelation $\gamma$ has an unique decomposition $\gamma=\gamma_{\mathsf{s}}+\gamma_0$ into two Fourier transformable measures\cite{ARMA,MoSt} such that
\[
  \widehat{\gamma_{\mathsf{s}}} = (\widehat{\gamma})_{\mathsf{pp}} \,;\,
  \widehat{\gamma_0} = (\widehat{\gamma})_{\mathsf{c}}  \,.
\]
This decomposition allows us study the pure point and continuous diffraction spectrum, respectively, in the real space by studying the two components $\gamma_{\mathsf{s}}$ and $\gamma_0$ of $\gamma$. This approach proved effective in the study of the diffraction spectra of measures with Meyer set support \cite{NS2,NS14,NS11,NS20,NS21} and
the diffraction of compatible random substitutions in 1 dimension \cite{Moll,BSS,Timo}. For Meyer sets, one can further decompose $\gamma_0=\gamma_{0s}+\gamma_{0a}$ in such a way that the Fourier transforms of $\gamma_{0s}$ and $\gamma_{0a}$ are $(\widehat{\gamma})_{\mathsf{sc}}$ and $(\widehat{\gamma})_{\mathsf{ac}}$, respectively \cite{NS20,NS21}.

\smallskip

For many examples of compatible random 1-dimensional Pisot substitutions, one gets a decomposition of the generic element $\omega$ of the hull into two measures $\omega_1$ and $\omega_2$ such that the diffraction of $\omega_1$ and $\omega_2$, respectively, are the pure point and continuous diffraction spectrum, respectively, of $\omega$ \cite{Moll,BSS,Timo}. A similar decomposition hold for 1-dimensional PV substitutions \cite{BS} and for dynamical systems of translation bounded measures \cite{JBA}. It is the one of the goals of this paper to investigate this type of decomposition, at the level of measures and not autocorrelations, in more general settings.

\smallskip

We will approach this question from a different angle. If $\omega=\omega_1+\omega_2$ is such a potential decomposition, and $\cA$ is any van Hove sequence, then there exist a subsequence $\cA'$ of $\cA$ so that (see \eqref{aut lc})
\begin{equation}\label{eq7}
\gamma=\gamma_1+\gamma_2+\ltebe \omega_1 ,\omega_2 \rtebe_{\cA'} +\widetilde{\ltebe \omega_1 ,\omega_2 \rtebe_{\cA'}}  \,,
\end{equation}
where $\gamma, \gamma_1, \gamma_2$ are the autocorrelations of $\omega,\omega_1,\omega_2$,respectively, with respect to $\cA'$ and $\ltebe \omega_1 ,\omega_2 \rtebe_{\cA'}$ denote the twisted Eberlein convolutions (see Def.~\ref{def Ebe conv}). Therefore, we obtain such a decomposition whenever when $\omega_1$ is pure point diffractive, $\omega_2$ has continuous diffraction spectrum and, the following orthogonality with respect to the twisted Eberlein decomposition holds:
\begin{equation}\label{eq1111111}
\ltebe \omega_1 ,\omega_2 \rtebe_{\cA'}  =0 \,.
\end{equation}
Note here also that for positive measures $\omega_1,\omega_2$, \eqref{eq7} implies that $\gamma=\gamma_1+\gamma_2$ is equivalent to \eqref{eq1111111}.

\smallskip
The main goal of the article is to prove that, under suitable conditions, if $\omega_1$ has pure point diffraction spectrum and $\omega_2$ has continuous diffraction spectrum, then \eqref{eq1111111} holds. In particular, the pure point spectrum of $\omega=\omega_1+\omega_2$ is the spectrum of $\omega_1$ and the continuous spectrum of $\omega$ is the spectrum of $\omega_2$. We prove few results in this direction. First, in Thm.~\ref{thm: main} we show that if $\omega_1$ has pure point diffraction spectrum, $\omega_2$ has continuous diffraction spectrum, and the Fourier--Bohr coefficients of $\omega_2$ exist uniformly then \eqref{eq1111111} holds. Next, in Cor.~\ref{cor 2} we show that \eqref{eq1111111} also holds if $\omega_1$ has pure point diffraction spectrum, $\omega_2$ has continuous diffraction spectrum, the Fourier--Bohr coefficients of $\omega_1$ exist uniformly and the Consistent Phase Property (CPP) (see Def.~\ref{def CPP}) holds for $\omega_2$. We continue by restricting our attention to the case when $\omega_1$ is Besicovitch almost periodic, and we show in Cor.~\ref{prop 6.9} that, if $\omega_1$ is Besicovitch almost periodic with respect to $\cA$, $\omega_2$ has continuous diffraction spectrum with respect to $\cA$ and the CPP holds for $\omega_2$ then \eqref{eq1111111} holds. Finally, in Thm.~\ref{thm main 2} we show that if $(\XX,G,m)$ and $(\YY, G, n)$ are ergodic dynamical systems of translation bounded measures, one with pure point spectrum and the other with continuous diffraction spectra, then \eqref{eq1111111} holds for $m$-almost all $\omega_1$ and $n$-almost all $\omega_2$. In particular, some type of generalized Eberlein decomposition holds for such dynamical systems of translation bounded measures such that \eqref{eq1111111} holds almost surely (Thm.~\ref{thm last}).

\smallskip

The Fourier--Bohr coefficients are a central tool we use in many proofs, and along the way we prove various properties for the Fourier--Bohr coefficients of the twisted Eberlein convolution, which are interesting by themselves. First, we show in Thm.~\ref{thm:FB-coefficiceints} that whenever when the twisted Eberlein convolution $\ltebe \mu, \nu \rtebeA$ exists,  $a_\chi(\mu)$ exist uniformly and $a_{\chi}^\cA(\nu)$ exist, then we have the generalized (CPP) relation
\[
a_{\chi}(\ltebe \mu, \nu \rtebeA)= a_{\chi}(\mu) \overline{a_{\chi}(\nu)} \,.
\]
This implies that any measure $\mu$ with uniform Fourier--Bohr coefficients satisfies CPP, a result which was previously proven in $\RR^d$ by \cite{HOF3} and for dynamical systems of translation bounded measures in \cite{Len}. This result has an intriguing consequence: in Thm.~\ref{thm:FB-coefficiceints3} we show that the existence of the twisted Eberlein convolution $\ltebe \mu, \nu \rtebeA$ combined with the uniform existence  $a_\chi(\mu)$ exist uniformly yields the following:
\begin{itemize}
  \item{}  For all $\chi$ with $a_{\chi}(\mu)=0$ we have $a_{\chi}(\ltebe \mu, \nu \rtebeA)=0$.
  \item{}  For all $\chi$ with $a_{\chi}(\mu)\neq 0$, the Fourier--Bohr coefficient $a_\chi^{\cA}(\nu)$ exists and $a_{\chi}(\ltebe \mu, \nu \rtebeA)=a_\chi(\mu)\overline{a_\chi^{\cA}(\nu)}$.
\end{itemize}
It is perhaps surprising that in the second situation we get the existence of  $a_\chi^{\cA}(\nu)$ for free, which hints that there may be an even deeper connection in general between the Fourier--Bohr coefficients and the twisted Eberlein convolution.

\smallskip

The paper is organized as follows. In Section~\ref{sec 2} we briefly review some of the concepts needed in this paper. In Section~\ref{sec 5} we study the connection between the Fourier--Bohr coefficients of $\ltebe \mu, \nu \rtebeA$ and the Fourier--Bohr coefficients of $\mu$ and $\nu$, and discus the consequences of these results to the diffraction is Section~\ref{sect diff}. The orthogonality relation \eqref{eq1111111} is studied in Section~\ref{sec 6} and Section~\ref{sec 7}. We complete the paper by briefly discussion a generalized Eberlein decomposition for measures.

\smallskip

Before starting, let us also emphasize that all results about twisted Eberlein convolution from this paper can automatically be translated to the Eberlein convolution via the relations
\[
\ltebe f , g \rtebeA = f \ebeA \tilde{g}  \,;\, \ltebe \mu , \nu \rtebeA = \mu \ebeA \tilde{\nu}  \,,
\]
Lemma~\ref{fb tilde} and Lemma~\ref{fb:prop}. Since obtaining the equivalent relations for the Eberlein convolution is a trivial exercise, we leave this as an exercise for the reader.

\section{Preliminaries}\label{sec 2}

In this section we briefly review the background information needed in this paper. Throughout the paper $G$ is a second countable locally compact Abelian group (LCAG). We denote by $\widehat{G}$ the dual group of $G$. For a Borel set $A \subseteq G$ we denote the Haar measure of $A$ by $|A|$.

As usual, we denote by $\Cu(G)$ the space of uniformly continuous bounded functions on $G$ and by
$\Cc(G)$ the subspace of $\Cu(G)$ consisting of compactly supported continuous functions. $\cM^\infty(G)$ denotes the space of translation bounded measures.

Since this paper is relying on \cite{LSS3}, instead of collecting here most of the results of this paper, we will assume that the reader is familiar with it and refer to it in the relevant places.
We will also briefly review some of the less common definitions and properties and refer the reader to \cite{TAO,TAO2,LSS,LSS3} for the basic background material.

\subsection{Fourier Bohr coefficients}

In this section we review the basic properties of Fourier--Bohr coefficients. For more details we refer the reader to \cite{LSS}.

We start with the definition.

\begin{definition}  Let $f \in \Cu(G)$ and $\mu \in \cM^\infty(G)$, let $\chi \in \widehat{G}$ and let $\cA$ be van Hove sequence.
We say that the \textbf{Fourier--Bohr coefficient} $a_\chi^{\cA}(f)$ exists if the following limit exists:
\begin{displaymath}
a_\chi^\cA(f):=\lim_n \frac{1}{|A_n|} \int_{A_n} \overline{ \chi(t)}f(t) \dd t \,.
\end{displaymath}
We further say that the Fourier--Bohr coefficient exists \textbf{uniformly} if the limit
\begin{displaymath}
\lim_n \frac{1}{|A_n|} \int_{x+A_n} \overline{ \chi(t)}f(t) \dd t \,,
\end{displaymath}
exists uniformly in $x$.

Same way we say that the \textbf{Fourier--Bohr coefficient} $a_\chi^{\cA}(\mu)$ exists if the following limit exists:
\begin{displaymath}
a_\chi^\cA(\mu):=\lim_n \frac{1}{|A_n|} \int_{A_n} \overline{ \chi(t)} \dd \mu(t) \,.
\end{displaymath}
We further say that the Fourier--Bohr coefficient exists \textbf{uniformly} if the limit
\begin{displaymath}
\lim_n \frac{1}{|A_n|} \int_{x+A_n} \overline{ \chi(t)}\dd \mu(t)\,,
\end{displaymath}
exists uniformly in $x$.
\end{definition}

\medskip

The next result plays a central role in the rest of the paper (compare \cite{EBE,MoSt}).

\begin{lemma}\label{lemma: FB uniform indep van hove2}\cite[Cor.~1.12]{LSS} Let $f \in \Cu(G)$ and $\chi \in G$. If $a_\chi^{\cA}(f)$ exists uniformly, then $a_\chi^{\cB}(f)$ exists uniformly for all van Hove sequences $\cB$ and
\[
a_\chi^{\cB}(f)=a_\chi^{\cA}(f) \,.
\] \qed
\end{lemma}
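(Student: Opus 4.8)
The plan is to reduce the statement to the already-established invariance of the mean under a change of van Hove sequence, namely Proposition~\ref{prop:mean indep}. The key observation is that computing the Fourier--Bohr coefficient of $f$ along the character $\chi$ is nothing but computing the mean of the modulated function $\overline{\chi}\,f$, and so the desired independence of the averaging sequence is an instance of the independence of the mean.

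First I would check that $g := \overline{\chi}\,f$ lies in $\Cu(G)$, so that Proposition~\ref{prop:mean indep} is applicable to it. Since $f \in \Cu(G)$ is bounded and uniformly continuous, it suffices to verify the same for $\chi$. Boundedness is immediate because $|\chi(t)| = 1$ for all $t \in G$. For uniform continuity, observe that
$$
|\chi(x) - \chi(y)| = |\chi(y)|\,|\chi(x-y) - 1| = |\chi(x-y) - 1| \,,
$$
so the modulus of continuity depends only on the difference $x - y$; hence continuity of $\chi$ at the identity upgrades automatically to uniform continuity on all of $G$. A product of two bounded uniformly continuous functions is again bounded and uniformly continuous, giving $g \in \Cu(G)$.

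Next I would note that the two assertions ``the Fourier--Bohr coefficient $c_\chi^{\cA}(f)$ exists uniformly'' and ``$g$ is amenable with respect to $\cA$'' are literally the same statement: both say that
$$
\lim_n \frac{1}{|A_n|} \int_{x+A_n} \overline{\chi(t)}\,f(t) \dd t
$$
exists uniformly in $x \in G$. Evaluating at $x = 0$ identifies this common limit with $c_\chi^{\cA}(f) = M_{\cA}(g)$.

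With this dictionary in place, the conclusion is immediate from Proposition~\ref{prop:mean indep}: since $g$ is amenable with respect to $\cA$, it is amenable with respect to every van Hove sequence $\cB$, and $M_{\cB}(g) = M_{\cA}(g)$. Translating back through the same identification, $c_\chi^{\cB}(f)$ exists uniformly for every van Hove sequence $\cB$ and equals $c_\chi^{\cA}(f)$. I do not expect a genuine obstacle here; the only point that deserves a moment's care is the verification that $\overline{\chi}\,f \in \Cu(G)$, since that is exactly the hypothesis under which Proposition~\ref{prop:mean indep} has been stated, and it is settled by the translation invariance of $|\chi(x) - \chi(y)|$ recorded above.
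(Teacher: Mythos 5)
Your proof is correct and follows exactly the paper's own argument: identify $c_\chi^{\cA}(f)$ with the mean of $\overline{\chi}\,f$ and invoke Proposition~\ref{prop:mean indep}. The only difference is that you spell out the verification $\overline{\chi}\,f \in \Cu(G)$, which the paper simply asserts.
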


The same result holds for measures:

\begin{lemma}\cite[Cor.~1.12]{LSS}\label{lemma: FB uniform indep van hove} Let $\mu \in \cM^\infty(G)$ and $\chi \in \widehat{G}$. If the Fourier--Bohr coefficient $a_\chi^\cA(\mu)$ of $\mu \in \cM^\infty(G)$ exists uniformly with respect to the van Hove sequence $\cA=\{ A_n \}$, and $\cB=\{B_n \}$ is another van Hove sequence, then the Fourier--Bohr coefficient of $\mu$ exists uniformly with respect to  $\cB$ and
\[
a_\chi^\cB(\mu)=a_\chi^\cA(\mu)  \,.
\]\qed
\end{lemma}

\medskip

Due to Lemma~\ref{lemma: FB uniform indep van hove2} and Lemma~\ref{lemma: FB uniform indep van hove}, whenever when the Fourier--Bohr coefficient $f \in \Cu(G)$ or $\mu \in \cM^\infty(G)$, respectively, exists uniformly, we can simply write $a_\chi(f)$ and $a_\chi(\mu)$, respectively. We will do this in the remaining of the paper. Moreover, whenever when we write $a_\chi(f)$ and $a_\chi(\mu)$, respectively, it is understood that the Fourier--Bohr coefficients exist uniformly.

\medskip

Since we will often need to refer to the non-zero Fourier--Bohr coefficients, we introduce the following definitions.

\begin{definition}\label{def CPP} We say that
\begin{itemize}
  \item[(a)] $f \in \Cu(G)$ (or $\mu \in \cM^\infty(G)$) has a \textbf{well defined Fourier--Bohr spectrum} with respect to $\cA$ if for all $\chi \in \widehat{G}$ the Fourier--Bohr coefficient $a_{\chi}^\cA(f)$ (or $a^\cA_{\chi}(\mu)$) exists. In this case, the \textbf{Fourier--Bohr spectrum of $f$ (or $\mu$) with respect to $\cA$} is
\begin{align*}
\FB_{\cA}(f)&=\{ \chi \in \widehat{G} : a_\chi^{\cA}(f) \neq 0 \} \,, \\
\FB_{\cA}(\mu)&=\{ \chi \in \widehat{G} : a_\chi^{\cA}(\mu) \neq 0 \} \,.
\end{align*}

If furthermore the Fourier--Bohr coefficients exist uniformly, we say that $f$ or $\mu$, respectively, has \textbf{uniform Fourier--Bohr spectrum}.
  \item[(b)]  We say that $f$ or $\mu$, respectively, has a \textbf{null Fourier--Bohr spectrum with respect to $\cA$} if the Fourier--Bohr spectrum exists with respect to $\cA$ and is empty.
  If the Fourier--Bohr spectrum exists uniformly and is empty we say that $f$ or $\mu$, respectively, \textbf{uniformly-null Fourier--Bohr spectrum}.

\item[(c)] Let  $\mu \in \cM^\infty(G)$ and $\cA$ be a van Hove sequence so that the autocorrelation $\gamma$ of $\mu$ exists with respect to $\cA$. We say that $\mu$ satisfies the \textbf{Consistent Phase Frequency (CPP)} with respect to $\cA$ if the Fourier--Bohr spectrum is well defined with respect to $\cA$ and
\begin{equation}\label{eqn CPP}
\widehat{\gamma}(\{ \chi \})=|a_\chi^{\cA}(\mu)|^2 \quad \forall \chi \in \widehat{G} \qquad \mbox{\bf \rm (CPP) } \,.
\end{equation}
\end{itemize}

\end{definition}

As pointed above, if $f$ or $\mu$ have uniform Fourier--Bohr spectrum, then the Fourier--Bohr spectrum exists for all van Hove sequences, and is independent of the choice of the van Hove sequence. In this case we simply write $\FB(f)$ and $\FB(\mu)$, respectively, for the Fourier--Bohr spectrum.

\begin{remark}
\begin{itemize}
  \item[(a)] Let $\mu \in \cM^\infty(G)$ be a measure whose autocorrelation $\gamma$ exists with respect to $\cA$ and $\mu$ satisfies the CPP.
  Then $\mu$ has null Fourier--Bohr spectrum if and only if $\widehat{\gamma}$ is a continuous measure.
  \item[(b)] We will show later in Thm.~\ref{thm: phase consistency property} that the uniform Fourier--Bohr spectrum implies that CPP holds for each van Hove sequence for which the autocorrelation exist.
\end{itemize}
\end{remark}

\medskip

Next, we study how the Fourier--Bohr coefficients behave with respect to the basic operations on functions.

\begin{lemma}\label{fb tilde}Let $f \in \Cu(G), \chi \in \widehat{G}$ and $\cA$ a van Hove sequence. If $a_\chi^{\cA}(f)$ exists (uniformly) then
$a_\chi^{\cA}(\overline{f}),a_\chi^{-\cA}(\tilde{f}),a_\chi^{-\cA}(f^\dagger)  $ exist (uniformly) and
\begin{align}
a_{\bar{\chi}}^{\cA}(\overline{f}) &=\overline{ a_{\chi}^{\cA}(f)}  \label{R1} \\
a_\chi^{-\cA}(\tilde{f}) &= \overline{a_{\chi}^{\cA}(f)} \nonumber \\
a_{\bar{\chi}}^{-\cA}(f^\dagger)  & =a_{\chi}^{\cA}(f) \,.  \label{R3}
\end{align}
 \end{lemma}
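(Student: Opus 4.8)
The plan is to reduce all three identities to the definition of the Fourier--Bohr coefficient, using only two elementary facts. The first is that complex conjugation is continuous and commutes with the averaging integral, so that a limit of conjugated averages exists exactly when the original does, with the conjugate value. The second is that on a LCAG the Haar measure is inversion invariant (abelian groups are unimodular), so that $|-A_n|=|A_n|$ and $\int_{-A_n} g(t)\,\dd t=\int_{A_n} g(-t)\,\dd t$; consequently the reflected sequence $-\cA:=\{-A_n\}_n$ is again a van Hove sequence, and an average over $-A_n$ may be converted into an average over $A_n$ of the inverted integrand. Throughout I will use that a character satisfies $\chi(-t)=\overline{\chi(t)}$, equivalently $\bar\chi=\chi^{-1}$ where $\bar\chi(t):=\overline{\chi(t)}$, so that $\overline{\bar\chi(t)}=\chi(t)$. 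For \eqref{R1} nothing beyond the first fact is needed: since $\overline{\bar\chi(t)}\,\overline{f(t)}=\overline{\,\overline{\chi(t)}f(t)\,}$, the average defining $c_{\bar\chi}^{\cA}(\overline f)$ is literally the conjugate of the average defining $c_\chi^{\cA}(f)$, so the former exists and equals $\overline{c_\chi^{\cA}(f)}$, and the identity persists verbatim with $A_n$ replaced by $x+A_n$.

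For the two remaining identities I would first apply the substitution $t=-s$ to the integral over $-A_n$, obtaining $\frac{1}{|-A_n|}\int_{-A_n}\overline{\chi(t)}\,\tilde f(t)\,\dd t=\frac{1}{|A_n|}\int_{A_n}\overline{\chi(-s)}\,\tilde f(-s)\,\dd s$, and similarly for $f^\dagger$ with $\bar\chi$ in place of $\chi$. Now the integrands collapse under the stated identities. Using $\tilde f(-s)=\overline{f(s)}$ and $\overline{\chi(-s)}=\chi(s)$, the integrand in the $\tilde f$ case becomes $\chi(s)\overline{f(s)}=\overline{\,\overline{\chi(s)}f(s)\,}$, so by the first fact the limit exists and $c_\chi^{-\cA}(\tilde f)=\overline{c_\chi^{\cA}(f)}$. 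Using $f^\dagger(-s)=f(s)$ and $\overline{\bar\chi(-s)}=\overline{\chi(s)}$, the integrand in the $f^\dagger$ case becomes exactly $\overline{\chi(s)}f(s)$, so here no conjugation is incurred and $c_{\bar\chi}^{-\cA}(f^\dagger)=c_\chi^{\cA}(f)$, which is \eqref{R3}.

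The uniform statements follow by the same manipulations once the translation parameter is tracked. For the reflected sequence the average over $x+(-A_n)$ becomes, after $t=-s$, an average over $(-x)+A_n$ of the inverted integrand; since uniform existence of $c_\chi^{\cA}(f)$ means uniformity over all translates $y+A_n$, and $y=-x$ ranges over all of $G$ as $x$ does, uniformity transfers intact to $-\cA$ (and, after conjugation, to $\overline f$ and $\tilde f$). The only genuinely delicate point — and the place I would be most careful — is precisely this interplay between the reflection of the averaging sequence and the reflection of the translation parameter, together with checking that $-\cA$ is a van Hove sequence and that the inversion invariance of Haar measure is invoked correctly; everything else is routine bookkeeping with the character identity $\chi(-t)=\overline{\chi(t)}$.
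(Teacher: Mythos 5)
Your proof is correct and follows essentially the same route as the paper: \eqref{R1} by pulling conjugation through the average, and the reflected cases by the substitution $t\mapsto -s$ together with inversion invariance of the Haar measure, tracking the translate $x+A_n\mapsto (-x)+A_n$ for uniformity. The only cosmetic difference is that you compute $c_\chi^{-\cA}(\tilde f)$ directly, whereas the paper obtains it from the other two identities via $\tilde f=\overline{f^\dagger}$; this changes nothing of substance.
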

 \begin{proof}
 The computations are straightforward: for each $x \in G$ and $n$ we have
\begin{displaymath}
\frac{1}{|A_n|}\int_{x+A_n} \chi(t) \overline{f(t)} \dd t = \overline{ \frac{1}{|A_n|}\int_{x+A_n} \overline{\chi(t)}f(t) \dd t }  \,.
\end{displaymath}
Taking the limit we get \eqref{R1}.

Next,
\begin{align*}
\frac{1}{|A_n|}\int_{x-A_n} \chi(t)  f^\dagger (t) \dd t & =  \frac{1}{|A_n|}\int_{x-A_n}\overline{\chi(-t)}  f(-t) \dd t
&=  \frac{1}{|A_n|}\int_{-x+A_n}\overline{\chi(s)}  f(s) \dd s
\end{align*}
Taking the limit we get \eqref{R3}.

Finally,
$$
a_\chi^{-\cA}(\tilde{f})=a_\chi^{-\cA}(\overline{f^\dagger})=\overline{a_{\bar{\chi}}^{-\cA}(f^\dagger)}=\overline{a_{\chi}^{\cA}(f)} \,,
$$
with all limits existing (uniformly) by the above.

 \end{proof}

 \medskip

Since, $\widetilde{\mu*\varphi}=\widetilde{\mu}*\tilde{\varphi}$, we can use \cite[Cor.~1.11]{LSS} to transfer these relations to measures.

\begin{coro}\label{fb:prop} Let $ \mu \in \Cu(G), \chi \in \widehat{G}$ and $\cA$ a van Hove sequence. If $a_\chi^{\cA}(\mu)$ exists (uniformly) then
$a_\chi^{\cA}(\overline{\mu}),a_\chi^{-\cA}(\tilde{\mu}),a_\chi^{-\cA}(\mu^\dagger)$ exist (uniformly) and
\begin{align*}
a_{\bar{\chi}}^{\cA}(\overline{\mu}) &=\overline{ a_{\chi}^{\cA}(\mu)}   \\
a_\chi^{-\cA}(\tilde{\mu}) &= \overline{a_{\chi}^{\cA}(\mu)}   \\
a_{\bar{\chi}}^{-\cA}(\mu^\dagger)  & =a_{\chi}^{\cA}(\mu) \,.
\end{align*}
\qed
 \end{coro}

Let us note here in  passing that, when $a_\chi(\mu)$ exists uniformly, we have
\[
a_{\bar{\chi}}(\overline{\mu}) = \overline{ a_{\chi}(\mu)}  \, ; \,
a_\chi(\tilde{\mu}) = \overline{a_{\chi}(\mu)}   \,;\,
a_{\bar{\chi}}(\mu^\dagger)  =a_{\chi}(\mu) \,.
\]

\subsection{Twisted Eberlein convolution of measures}

Here we briefly review the basic properties of Eberlein convolution of measures from \cite{LSS3}.

\begin{definition}\label{def Ebe conv} We say that $f,g : G \to \CC$ have a well defined \textbf{twisted Eberlein convolution} with respect to $\cA$ if for all $t \in G$ the function $s \to f(s)\overline{g(s-t)}$ is locally integrable and the following limit exists
$$
\ltebe f , g \rtebeA (t):= \lim_n \frac{1}{|A_n|} \int_{A_n} f(s) \overline{g(s-t)} \dd s  \,.
$$

We say that $\mu, \nu \in \cM^\infty(G)$ have a well defined \textbf{Eberlein convolution} with respect to $\cA$ if the following vague limit exists
\begin{displaymath}
\ltebe \mu , \nu \rtebeA=\lim_n \frac{1}{|A_n|}\mu|_{A_n}*\widetilde{\nu|_{A_n}} \,.
\end{displaymath}
\end{definition}

Note here that, given $\mu \in \cM^\infty(G)$ and a van Hove sequence $\cA$, the autocorrelation $\gamma$ of $\mu$ exists with respect to $\cA$ if and only if
$\ltebe \mu, \mu \rtebeA$ exists. In this situation we have $\gamma = \ltebe \mu, \mu \rtebeA$.

\begin{remark} One of the primary goals of the paper is to look at the autocorrelation of a measure $\mu$ of the form $\omega=a\mu+b\nu$ for some $\mu, \nu \in \cM^\infty(G)$ and $a,b \in \CC$.

By \cite[Thm.~4.14]{LSS3}, there exists some subsequence $\cA'$ of $\cA$ such that all the convolutions below are well defined and
\begin{align}
  \gamma_\omega &= \ltebe \left(a\mu+b\nu \right) , \left(a\mu+b\nu \right) \rtebeA  \nonumber\\
 &= |a|^2 \ltebe \mu , \mu\rtebe_{\cA'} +|b|^2 \ltebe \mu , \nu\rtebe_{\cA'} +a \bar{b} \ltebe \mu , \nu\rtebe_{\cA'} + b \bar{a} \ltebe \nu , \mu\rtebe_{\cA'}     \nonumber\\
 &= |a|^2\gamma_{\mu} +|b|^2 \gamma_{\nu}+a \bar{b} \ltebe \mu , \nu\rtebe_{\cA'} + b \bar{a} \widetilde{\ltebe \mu , \nu\rtebe_{\cA'}} \label{aut lc} \,,
\end{align}
where $\gamma_{\mu}, \gamma_{\nu}$ are the autocorrelations of $\mu$ and $\nu$, respectively, with respect to $\cA'$. Moreover, \eqref{aut lc} holds for all van Hove subsequences
$\cA'$ of $\cA$ for which all Eberlein convolutions are well defined.

We will show that under certain conditions we have $\ltebe \mu , \nu\rtebe_{\cA'}=0$, leading to
\[
\gamma_{\omega}=  |a|^2\gamma_{\mu} +|b|^2 \gamma_{\nu} \,.
\]
\smallskip
Let us emphasize here that computations of this type are essential to the study of substitution tilings via renormalisation equations (see for example \cite{BaGa,Man,BGrM,BFGR,BGM,BG,BS,BFG} just to name a few), as well as in the study of random substitutions (see for example \cite{Moll,BSS,RS,Timo}).
\end{remark}

\medskip

The following result allows us switch back and forth between the Eberlein convolution of measures and functions.

\begin{lemma}\label{lemma ebe equiv} Let $\mu, \nu \in \cM^\infty(G)$ and $\cA$ a van Hove sequence. Then, the following are equivalent:
\begin{itemize}
\item[(i)] The twisted Eberlein convolution $\ltebe \mu , \nu \rtebeA$ exists.
\item[(ii)] For all $\varphi,\psi \in \Cc(G)$  the following limit exists
\[
M_{\cA}(\left( \mu*\varphi \right)\overline{\left( \nu*\psi \right)}):= \lim_n \frac{1}{|A_n|} \int_{A_n} \left( \mu*\varphi(t) \right)\overline{\left( \nu*\psi(t) \right)} \dd t \,.
\]
\item[(iii)] For all $\varphi,\psi \in \Cc(G)$ the twisted Eberlein convolution $\ltebe \bigl(\varphi*\mu \bigr) ,\left( \psi*\nu \right) \rtebeA$ exists.
\end{itemize}

Moreover, in this case, for all  $\varphi,\psi \in \Cc(G)$ we have
\begin{align*}
M_{\cA}( \mu*\varphi \cdot \overline{ \nu*\psi})&= \bigl( \ltebe \mu , \nu \rtebeA *\varphi *\tilde{\psi}\bigr)(0) =\ltebe \bigl(\varphi*\mu \bigr) \,\left( \psi*\nu \right) \rtebeA(0) \quad \mbox{ and} \\
\ltebe \mu , \nu \rtebeA*\varphi*\tilde{\psi}&= \ltebe \bigl(\varphi*\mu \bigr) ,\left( \psi*\nu \right) \rtebeA \,.
\end{align*}
\end{lemma}
\begin{proof}
(i) $\Leftrightarrow$ (ii) and the equality
\[
M( \mu*\varphi \cdot \overline{ \nu*\psi})=\bigl( \ltebe \mu , \nu \rtebeA *\varphi *\tilde{\psi}\bigr)(0) \,,
\]
follow from \cite[Prop.~1.4]{LSS} .

(i) $\Rightarrow$ (ii) is a standard van Hove computation.

(iii) $\Rightarrow$ (i) and the equality
\[
M_{\cA}( \mu*\varphi \cdot \overline{ \nu*\psi})=\ltebe \bigl(\varphi*\mu \bigr)  ,\left( \psi*\nu \right) \rtebeA(0)
\]
follow immediately from the observation that for all $\varphi, \psi \in \Cc(G)$ and $t \in G$ we have
\begin{align*}
&\frac{1}{|A_n|} \int_{A_n} \bigl(\varphi*\mu \bigr)(s)\widetilde{ \left( \psi*\nu \right)}(t-s) \dd s =\frac{1}{|A_n|} \int_{A_n} \bigl(\varphi*\mu \bigr)(s)\overline{ \left( \psi*\nu \right)}(s-t) \dd s \\
&=\frac{1}{|A_n|} \int_{A_n} \bigl(\varphi*\mu \bigr)(s)\overline{ \tau_t\left( \psi*\nu \right)}(s) \dd s =\frac{1}{|A_n|} \int_{A_n} \bigl(\varphi*\mu \bigr)(s)\overline{  (\tau_t\psi)*\nu }(s) \dd s \,.
\end{align*}

Finally,
\begin{align*}
  \ltebe \bigl(\varphi*\mu \bigr) ,\left( \psi*\nu \right) \rtebeA (t) & =  \ltebe \tau_{-t} \bigl(\varphi*\mu \bigr) ,\left( \psi*\nu \right) \rtebeA (0) \\
  = \ltebe  \bigl(\varphi*(\tau_{-t}\mu) \bigr) ,\left( \psi*\nu \right) \rtebeA (0)  &=\ltebe \tau_{-t} \mu, \nu \rtebeA *\varphi*\tilde{\psi}  (0) \\
  =\left(\tau_{-t}\ltebe  \mu, \nu \rtebeA\right) *\varphi*\tilde{\psi}  (0)&=\ltebe  \mu, \nu \rtebeA *\varphi*\tilde{\psi}(t) \,.
\end{align*}
\end{proof}

\section{Fourier--Bohr coefficients of Eberlein convolution}\label{sec 5}

We start by covering the following result which relates the Fourier--Bohr coefficient of $\ltebe f , g \rtebeA$ to the Fourier--Bohr coefficients of $f,g$. Particular versions of this result can be found in \cite{EBE,ARMA,MoSt,LS,LSS}, and Prop.~\ref{lem:FB coefficiceints} below generalizes all these results.

Before starting let us emphasize that, for $f,g \in \Cu(G)$ for which $\ltebe f, g \rtebeA$ exist, the uniform existence of $a_\chi(\ltebe f,g \rtebeA)$ follows from  \cite[Cor.~5.4]{LSS3} and \cite[Prop.~4.5.9]{MoSt}. In Prop~\ref{lem:FB coefficiceints} below, under the extra assumptions (b,c), we can show directly via a simple standard computation the uniform existence of $a_\chi(\ltebe f,g \rtebeA)$. Since this proof is more elementary, and since \cite[Prop.~4.5.9]{MoSt} relies on very technical results, spanning much of \cite{MoSt}, in Prop.~\ref{lem:FB coefficiceints} we will show directly the uniform existence of the Fourier--Bohr coefficients, without relying on \cite{MoSt}.

\begin{prop}\label{lem:FB coefficiceints} Let $f,g \in \Cu(G), \chi \in \widehat{G}$ and $\cA$ be a van Hove sequence such that
\begin{itemize}
  \item[(a)]The twisted Eberlein convolution $\ltebe f , g \rtebeA$ exists.
  \item[(b)] The Fourier--Bohr coefficient $a_\chi^{\cA}(f)$ exists.
  \item [(c)] The Fourier--Bohr coefficient $a_\chi(g)$ exists uniformly.
\end{itemize}
Then, the Fourier--Bohr coefficient $a_\chi^{\cA}(\ltebe f , g \rtebeA)$ exists uniformly and satisfies
\begin{displaymath}
a_\chi(\ltebe f , g \rtebeA)= a_\chi^{\cA}(f) \overline{a_\chi(g)} \,.
\end{displaymath}
\end{prop}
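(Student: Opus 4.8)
The plan is to reduce everything to a computation of the ordinary mean by ``twisting'' by the character $\chi$. The starting point is a pointwise identity: since $\chi$ is a character, $\overline{\chi(s)}\,\overline{\chi(t-s)}=\overline{\chi(t)}$ for all $s,t\in G$, and hence
\[
(\overline{\chi}f)\circledast_{\cA}(\overline{\chi}g)(t)=\overline{\chi(t)}\,(f\circledast_{\cA}g)(t)\qquad(t\in G),
\]
the left-hand Eberlein convolution existing as soon as $f\circledast_{\cA}g$ does by (a). Writing $F:=\overline{\chi}f$ and $H:=\overline{\chi}g$, hypothesis (b) says precisely that $M_{\cA}(F)=c_\chi^{\cA}(f)$ exists, while (c) says that $H$ is amenable with $M(H)=c_\chi(g)$. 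Since $c_\chi^{\cA}(f\circledast g)$ is by definition the (uniform) mean of $\overline{\chi}\cdot(f\circledast_{\cA}g)=F\circledast_{\cA}H$, the whole proposition is equivalent to the clean statement: \emph{if $F\circledast_{\cA}H$ exists, $M_{\cA}(F)$ exists and $H$ is amenable, then $F\circledast_{\cA}H$ is amenable with mean $M_{\cA}(F)\,M(H)$}.

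To prove this I would set $\psi:=F\circledast_{\cA}H$ and $\psi_n(t):=\frac{1}{|A_n|}\int_{A_n}F(s)H(t-s)\,\dd s$, so that $\psi_n\to\psi$ pointwise with $\|\psi_n\|_\infty\le\|F\|_\infty\|H\|_\infty$. Fix a base point $x$ and an index $m$. Because the $\psi_n$ are uniformly bounded and $x+A_m$ has finite Haar measure, dominated convergence pulls the limit in $n$ out of the average over $x+A_m$, and Fubini then gives
\[
\frac{1}{|A_m|}\int_{x+A_m}\psi(t)\,\dd t=\lim_n\frac{1}{|A_n|}\int_{A_n}F(s)\,G_m(x-s)\,\dd s,\qquad G_m(y):=\frac{1}{|A_m|}\int_{y+A_m}H(r)\,\dd r .
\]
Here the amenability of $H$ enters: $G_m\to M(H)$ uniformly on $G$, so given $\eps>0$ I choose $m_0$ with $\|G_m-M(H)\|_\infty<\eps$ for $m\ge m_0$, whence $\bigl|\frac{1}{|A_n|}\int_{A_n}F(s)(G_m(x-s)-M(H))\,\dd s\bigr|\le\eps\,\|F\|_\infty$ uniformly in $n$ and $x$. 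Letting $n\to\infty$, the surviving term $M(H)\,\frac{1}{|A_n|}\int_{A_n}F(s)\,\dd s$ tends to $M(H)\,M_{\cA}(F)$ by (b), giving
\[
\Bigl|\frac{1}{|A_m|}\int_{x+A_m}\psi(t)\,\dd t-M(H)\,M_{\cA}(F)\Bigr|\le\eps\,\|F\|_\infty\qquad(m\ge m_0,\ x\in G).
\]
As $\eps$ is arbitrary, this is exactly uniform convergence of the $\psi$-averages to $M_{\cA}(F)\,M(H)$, i.e.\ $\psi$ is amenable with that mean.

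Unwinding the substitution, this says that $c_\chi^{\cA}(f\circledast g)$ exists uniformly and equals $c_\chi^{\cA}(f)\,c_\chi(g)$, as claimed. I would also record separately, with no extra effort, that $f\circledast_{\cA}g\in\Cu(G)$ (so the averages make sense) via $\|T_t(f\circledast_{\cA}g)-(f\circledast_{\cA}g)\|_\infty=\|(T_tf-f)\circledast_{\cA}g\|_\infty\le\|T_tf-f\|_\infty\|g\|_\infty$, which uses only $f,g\in\Cu(G)$ and avoids the heavier machinery of Lemma~\ref{lem 2}. The main obstacle is the legitimacy of the two limit interchanges — the defining limit of the Eberlein convolution against the $x+A_m$-average, and against the $A_n$-average inside — but both are controlled purely by the uniform boundedness of the $\psi_n$ and the finiteness of $|A_m|$, so no delicate equi-integrability is needed. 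The genuinely load-bearing hypothesis is the \emph{uniform} existence in (c): it is what replaces $G_m$ by a constant uniformly in the shift, thereby decoupling the two averages and yielding uniformity in $x$; by contrast (b) may be used non-uniformly, since after the substitution $r=t-s$ the $F$-average always sits over the fixed window $A_n$.
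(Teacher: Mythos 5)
Your proof is correct and follows essentially the same route as the paper's: both exchange the Fourier--Bohr average with the defining limit of the Eberlein convolution via dominated convergence on a finite-measure window, then use Fubini and a substitution so that $g$ ends up averaged over a shifted window (controlled by the uniform existence of $c_\chi(g)$) while $f$ stays on a fixed window (where plain existence of $c_\chi^{\cA}(f)$ suffices). Your preliminary twist by the character, replacing $f,g$ by $\overline{\chi}f,\overline{\chi}g$ and reducing to a clean statement about means and amenability, is only a notational streamlining of the computation the paper carries out with $\overline{\chi}$ kept explicit.
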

\begin{proof}

Let $\epsilon >0$. Then, by (b) and (c) there exists an $N_\epsilon$ so that, for all $n,m >N_\epsilon$ and all $x \in G$  we have
\begin{align*}
  \left|\frac{1}{|A_m|} \int_{A_m} \overline{\chi(t)} f(t)- a^{\cA}_{\chi}(f) \right| &< \frac{\eps}{4 \|g \|_\infty+1} \\
  \left|\frac{1}{|A_n|} \int_{x-A_n} \overline{\chi(t)} g(t)- a_{\chi}(g) \right| &< \frac{\eps}{4 \|f \|_\infty+1} \,.
\end{align*}

In particular, for all $m,n >N_\epsilon$ we have for all $x \in G$
\begin{align*}
&\left| \frac{1}{|A_n|}\frac{1}{|A_m|}\int_{A_m}  \int_{s-x-A_n} \chi(u)\overline{g(u)} \overline{\chi(s)} f(s) \dd u\dd s  -  a^{\cA}_{\chi}(f) \overline{a_{\chi}(g)} \right| \\
&\leq \left| \frac{1}{|A_n|}\frac{1}{|A_m|} \int_{A_m}  \int_{s-x-A_n} \chi(u)\overline{g(u)} \overline{\chi(s)} f(s) \dd u\dd s  -  \frac{1}{|A_m|} \int_{A_m} \overline{\chi(s)}f(s) \overline{a^{\cA}_{\chi}(g)} \dd s  \right|\\
&+\left|\frac{1}{|A_m|} \int_{A_m} \overline{\chi(s)}f(s) \overline{a_{\chi}(g)} \dd s  -  a^{\cA}_{\chi}(f)\overline{a_{\chi}(g)} \right| \\
&\leq \left| \frac{1}{|A_m|} \int_{A_m} \left(\frac{1}{|A_n|} \int_{s-x-A_n} \chi(u)\overline{g(u)}  \dd u  - \overline{a_{\chi}(g)} \right)\overline{\chi(s)} f(s) \dd s  \right|\\
&+|a_{\chi}(g)| \left|\frac{1}{|A_m|} \int_{A_m} \overline{\chi(s)}f(s) \dd s   -  a^{\cA}_{\chi}(f) \right| \\
&\leq \frac{1}{|A_m|} \int_{A_m}\left|f(s)\right| \left|\frac{1}{|A_n|} \overline{\int_{s-x-A_n} \overline{\chi(u)} g(u) \dd u-a_{\chi}(g)} \right| \dd s \\
&+|a_{\chi}(g)| \left|\frac{1}{|A_m|} \int_{A_m} \overline{\chi(s)}f(s) \dd s   -  a^{\cA}_{\chi}(f) \right| \\
&\leq \frac{\eps}{4 \|f \|_\infty}\left( \frac{1}{|A_m|} \int_{A_m}\left|  f(u)\right| \dd u \right)  +\frac{\eps}{4 \|g \|_\infty}  |a_{\chi}(g)| \\
&\leq \frac{\eps}{4 \|f \|_\infty+1} \|f \|_\infty +\frac{\eps}{4 \|g \|_\infty+1}  \|g \|_\infty < \frac{\eps}{2} \,.
\end{align*}
Thus, for all $m,n >N_\epsilon$ and $x \in G$ we have
\begin{equation}\label{eq4}
\left| \frac{1}{|A_n|}\frac{1}{|A_m|} \int_{A_m}  \int_{s-x-A_n} \chi(u)\overline{g(u)} \overline{\chi(s)} f(s) \dd u\dd s  -  a^{\cA}_{\chi}(f) \overline{a_{\chi}(g)} \right| < \frac{\epsilon}{2} \,.
\end{equation}
\bigskip
Next, let $n >N_\epsilon$ and $x \in G$ be fixed but arbitrary.

Recall that for all $t \in G$ we have
\[
\ltebe f , g \rtebeA (t)= \lim_m \frac{1}{|A_m|} \int_{A_m}  f(s)  \overline{g(s-t)} \dd s \,.
\]
Define
\[
h_m(t):= \left( \frac{1}{|A_m|} \int_{A_m}  f(s)  \overline{g(s-t)} \dd s  \right) \overline{\chi(t)} 1_{x+A_n}(t) \,.
\]
Then, $h_m(t)$ converges pointwise to $(\ltebe f , g \rtebeA (t)) \overline{\chi(t)} 1_{x+A_n}(t)$ and is bounded by $\| f\|_\infty \| g \|_\infty 1_{x+A_n} \in L^1(G)$. Therefore, by the dominated convergence theorem \cite{Ped,rud2} we have
\[
\lim_m \frac{1}{|A_n|} \int_{x+A_n} h_m(t) \dd t =  \frac{1}{|A_n|} \int_{x+A_n} \overline{\chi(t)}  \ltebe f , g \rtebeA (t) \dd t \,.
\]
Therefore, there exists some $M(n,x, \epsilon)$ such that, for all $m >M(n,x, \epsilon)$ we have
\begin{equation}\label{eq5}
\left|  \frac{1}{|A_n|} \int_{G} h_m(t) \dd t - \frac{1}{|A_n|} \int_{x+A_n} \overline{\chi(t)}  \ltebe f , g \rtebeA(t) \dd t  \right| <\frac{\epsilon}{2} \,.
\end{equation}
Note here that for all $m$ we have
\begin{align*}
\frac{1}{|A_n|} \int_{G} h_m(t) \dd t & =\frac{1}{|A_n|} \int_{x+A_n}\overline{\chi(t)} \frac{1}{|A_m|} \int_{A_m}  f(s) \overline{g(s-t)} \dd s   \dd t \\
&=\frac{1}{|A_n|}\frac{1}{|A_m|}  \int_{x+A_n}\int_{A_m} \overline{\chi(s)} f(s)\chi(s-t)  \overline{g(s-t)}  \dd s   \dd t \\
&\stackrel{\mbox{Fubini}}{=\joinrel=\joinrel=\joinrel=\joinrel=\joinrel=\joinrel= }\frac{1}{|A_n|}\frac{1}{|A_m|} \int_{A_m}  \int_{x+A_n} \overline{\chi(s)} f(s)\chi(s-t)  \overline{g(s-t)}   \dd t \dd s \\
&=\frac{1}{|A_n|}\frac{1}{|A_m|}\int_{A_m}   \overline{\chi(s)} f(s) \left( \overline{\int_{x+A_n} \overline{\chi(s-t)}  g(s-t)    \dd t} \right) \dd s \\
&\stackrel{u=s-t}{=\joinrel=\joinrel=\joinrel=\joinrel= }\frac{1}{|A_n|}\frac{1}{|A_m|}\int_{A_m}   \overline{\chi(s)} f(s) \overline{\left( \int_{s-x-A_n} \overline{\chi(u)}  g(u)    \dd u \right)} \dd s \,.
\end{align*}
Therefore,for all $m > M(n,x, \epsilon)$ we have by \eqref{eq5}
\begin{equation}\label{eq6}
\left| \frac{1}{|A_n|}\frac{1}{|A_m|}\int_{A_m}   \overline{\chi(s)} f(s) \overline{\left( \int_{s-x-A_n} \overline{\chi(u)}  g(u)    \dd u \right)} \dd s  - \frac{1}{|A_n|} \int_{x+A_n} \overline{\chi(t)} \ltebe f, g \rtebeA (t) \dd t  \right| <\frac{\epsilon}{2} \,.
\end{equation}
Now, pick one $m> \max \{ M(n,x, \epsilon),N_\epsilon \}$. By Combining \eqref{eq4} and \eqref{eq6} we get that for this $n,m,x$ we have
\begin{align*}
&\left| \frac{1}{|A_n|} \int_{x+A_n} \overline{\chi(t)}  \ltebe f, g \rtebeA (t) \dd t - a^{\cA}_{\chi}(f) \overline{a_{\chi}(g)}  \right| \\
&\leq  \left| \frac{1}{|A_n|}\frac{1}{|A_m|} \int_{A_m}  \int_{s-x-A_n} \chi(u)\overline{g(u)} \overline{\chi(s)} f(s) \dd u\dd s  -  a^{\cA}_{\chi}(f) \overline{a_{\chi}(g)} \right| \\
 &+\left| \frac{1}{|A_n|}\frac{1}{|A_m|}\int_{A_m}   \overline{\chi(s)} f(s) \overline{ \left( \int_{s-x-A_n} \overline{\chi(u)}  g(u)    \dd u \right)} \dd s  - \frac{1}{|A_n|} \int_{x+A_n} \overline{\chi(t)} \ltebe f , g\rtebeA(t) \dd t  \right| \\
 &\leq \frac{\epsilon}{2}+\frac{\epsilon}{2}=\epsilon \,.
\end{align*}
Therefore, for all $n >N_\epsilon$ and all $x \in G$ we have
\[
\left| \frac{1}{|A_n|} \int_{x+A_n} \overline{\chi(t)}  \ltebe f , g\rtebeA(t) \dd t - a^{\cA}_{\chi}(f) \overline{a_{\chi}(g)}  \right|< \epsilon \,,
\]
which proves the claim.
\end{proof}

Using the $\sim$-symmetry relation  $\ltebe g , f \rtebeA = \widetilde{\ltebe f , g \rtebeA}$, and Lemma~\ref{fb tilde} one immediately gets the symmetric result:
\begin{coro}
Let $f,g \in \Cu(G), \chi \in \widehat{G}$ and $\cA$ be a van Hove sequence such that $\ltebe f , g \rtebeA$ exists, $a_\chi(f)$ exists uniformly and $a^{\cA}_\chi(g)$ exists. Then
\begin{displaymath}
a_\chi(\ltebe f , g \rtebeA)= a_\chi(f) \overline{a^{\cA}_\chi(g)} \,.
\end{displaymath}
\end{coro}

As an interesting, and somewhat surprising consequence we get that, if $a_\chi(g)$ exist uniformly and is non-zero, the existence of $\ltebe f, g \rtebeA$ implies the existence of the
Fourier--Bohr coefficient $a_\chi^{\cA}(f)$.

\begin{theorem}\label{t1} Let $f,g \in \Cu(G), \chi \in \widehat{G}$ and $\cA$ be a van Hove sequence such that the twisted Eberlein convolution
$\ltebe f , g \rtebeA$ exists and the Fourier--Bohr coefficient $a_\chi(g)$ exists uniformly. Then, $a_{\chi}(\ltebe f , g \rtebeA)$ exist uniformly and
\begin{itemize}
  \item[(a)] If $a_\chi(g)=0$ then
\[
a_{\chi}(\ltebe f , g \rtebeA) =0 \,.
\]
  \item[(b)] If $a_\chi(g) \neq 0$ then the Fourier--Bohr coefficient $a_\chi^{\cA}(f)$ exists and
\[
a_{\chi}(\ltebe f , g \rtebeA) =a^{\cA}_{\chi}(f) \overline{a_{\chi}(g)} \,.
\]
\end{itemize}
\end{theorem}
\begin{proof}
The uniform existence of $ a_{\chi}(\ltebe f , g \rtebeA)$ follows from \cite[Cor.~5.4]{LSS3} and \cite[Prop.~4.5.9]{MoSt}.

\textbf{(a)} Consider the sequence
\[
a_n=\frac{1}{|A_n|} \int_{A_n} \overline{\chi(t)} f(t) \dd t \,.
\]
Since $f \in \Cu(G)$, $a_n$ is bounded in $\CC$, and hence has a convergent subsequence $a_{k_n}$.

Let $B_n:= A_{k_n}$ and set $\cB=\{ B_n \}_n$.

Since $\cB$ is a subsequence of $\cA$, and the Fourier--Bohr coefficient $a^{\cB}_{\chi}(f)$ exists, by Proposition~\ref{lem:FB coefficiceints} we get
\[
a_{\chi}( a_{\chi}(\ltebe f , g \rtebeA))=a_{\chi}(\ltebe f , g \rtebeA) = a_{\chi}^\cB(f) \overline{a_\chi(g)}=0 \,.
\]

\medskip
\textbf{(b)} Consider again the sequence
\[
a_n=\frac{1}{|A_n|} \int_{A_n} \overline{\chi(t)} f(t) \dd t \,.
\]
We want to show that $a_n$ converges to $a:=\frac{a_{\chi}(\ltebe f , g \rtebeA)}{a_{\chi}(g)}$.

Since $f \in \Cu(G)$, $a_n$ is bounded in $\CC$, and therefore it is a subset of a compact metric set. Therefore, to prove that $a_n$ converges to $a$ it suffices to show that any convergent subsequence of $a_n$ converges to $a$.

Now, we repeat the argument in (a). Let $a_{k_n}$ be a subsequence of $a_n$ which converges to some $a'$. Let $B_n:= A_{k_n}$ and set $\cB=\{ B_n \}_n$. Then, the Fourier--Bohr coefficient $a_{\chi}^{\cB}(f)$ exists and $ a_{\chi}^{\cB}(f)=a'$. Again by Proposition~\ref{lem:FB coefficiceints} we get
\[
a_{\chi}(\ltebe f , g \rtebeA)=a_{\chi}^\cB(f) a_\chi(g)=a' \cdot a_\chi(g) \,.
\]
Since $a_{\chi}(g) \neq 0$, we get $a'=a$, which proves the claim.
\end{proof}

By using again the $\sim$-symmetry of the twisted Eberlein convolution, we get the following twin result:

\begin{coro} Let $f,g \in \Cu(G), \chi \in \widehat{G}$ and $\cA$ be a van Hove sequence such that
$\ltebe f , g \rtebeA$ exists and the Fourier--Bohr coefficient $a_\chi(f)$ exists uniformly. Then, $a_{\chi}(\ltebe f , g \rtebeA)$ exist uniformly and
\begin{itemize}
  \item[(a)] If $a_\chi(f)=0$ then
\[
a_{\chi}(\ltebe f , g \rtebeA) =0 \,.
\]
  \item[(b)] If $a_\chi(f) \neq 0$ then the Fourier--Bohr coefficient $a_\chi^{\cA}(g)$ exists and
\[
a_{\chi}(\ltebe f , g \rtebeA) =a_{\chi}(f)\overline{a_\chi^{\cA}(g)} \,.
\]
\end{itemize}\qed
\end{coro}

\smallskip

Via the standard trick of convolutions with functions $\varphi, \psi \in \Cc(G)$, the above results extend trivially to measures. We start by
translating Prop.~\ref{lem:FB coefficiceints} to measures:

\begin{theorem}\label{thm:FB-coefficiceints} Let $\mu, \nu \in \cM^\infty(G), \chi \in \widehat{G}$ and $\cA$ be a van Hove sequence such that
\begin{itemize}
  \item[(a)]The twisted Eberlein convolution $\ltebe \mu , \nu \rtebeA$ exists.
  \item[(b)] The Fourier--Bohr coefficient $a_\chi^{\cA}(\mu)$ exists.
  \item [(c)] The Fourier--Bohr coefficient $a_\chi(\nu)$ exists uniformly.
\end{itemize}
Then, the Fourier--Bohr coefficient $a_\chi^{\cA}(\ltebe \mu , \nu \rtebeA)$ exists uniformly and satisfies
\begin{displaymath}
a_\chi(\ltebe \mu , \nu \rtebeA)= a_\chi^{\cA}(\mu) \overline{a_\chi(\nu)} \,.
\end{displaymath}
\end{theorem}
\begin{proof}
Pick some $\varphi \in \Cc(G)$ such that $\widehat{\varphi}(\chi)\neq0$, which exists by \cite{BF,MoSt}.

By Lemma~\ref{lemma ebe equiv}, $\ltebe \bigl(\varphi*\mu \bigr) ,\left( \varphi*\nu \right) \rtebeA$ exists and
\[
\ltebe \bigl(\varphi*\mu \bigr) ,\left( \varphi*\nu \right) \rtebeA =\ltebe \mu , \nu \rtebeA*\varphi*\tilde{\varphi} \,.
\]
The claim follows now from \cite[Cor.~1.1]{LSS} and Prop~\ref{lem:FB coefficiceints}.

\end{proof}

Once again, via $\sim$-symmetry we get:

\begin{coro}
Let $\mu,\nu \in \Cu(G), \chi \in \widehat{G}$ and $\cA$ be a van Hove sequence such that $\ltebe \mu , \nu \rtebeA$ exists, $a_\chi(\mu)$ exists uniformly and $a^{\cA}_\chi(\nu)$ exists. Then
\begin{displaymath}
a_\chi(\ltebe \mu , \nu \rtebeA)= a_\chi(\mu) \overline{a^{\cA}_\chi(\nu)} \,.
\end{displaymath}
\end{coro}

Next, we extend Thm.~\ref{t1}.

\begin{theorem}\label{thm:FB-coefficiceints3} Let $\mu, \nu \in \cM^\infty(G), \chi \in \widehat{G}$ and $\cA$ be a van Hove sequence such that $\ltebe \mu , \nu \rtebeA$ exists and the Fourier--Bohr coefficient $a_\chi(\nu)$ exists uniformly.
Then, $a_{\chi}(\ltebe \mu , \nu \rtebeA)$ exist uniformly and
\begin{itemize}
  \item[(a)] If $a_\chi(\nu)=0$ then
\[
a_{\chi}(\ltebe \mu , \nu \rtebeA) =0 \,.
\]
  \item[(b)] If $a_\chi(\nu) \neq 0$ then the Fourier--Bohr coefficient $a_\chi^{\cA}(\mu)$ exists and
\[
a_{\chi}(\ltebe \mu , \nu \rtebeA) =a^{\cA}_{\chi}(\mu) \overline{a_{\chi}(\nu)} \,.
\]
\end{itemize}\qed
\end{theorem}

\begin{coro} Let $\mu, \nu \in \cM^\infty(G), \chi \in \widehat{G}$ and $\cA$ be a van Hove sequence such that $\ltebe \mu , \nu \rtebeA$ exists and the Fourier--Bohr coefficient $a_\chi(\mu)$ exists uniformly.
Then, $a_{\chi}(\ltebe \mu , \nu \rtebeA)$ exist uniformly and
\begin{itemize}
  \item[(a)] If $a_\chi(\mu)=0$ then
\[
a_{\chi}(\ltebe \mu , \nu \rtebeA) =0 \,.
\]
  \item[(b)] If $a_\chi(\mu) \neq 0$ then the Fourier--Bohr coefficient $a_\chi^{\cA}(\nu)$ exists and
\[
a_{\chi}(\ltebe \mu , \nu \rtebeA) =a^{\cA}_{\chi}(\mu) \overline{a_{\chi}(\nu)} \,.
\]
\end{itemize}\qed
\end{coro}

\section{The diffraction measure}\label{sect diff}

Here we list some consequences of the Fourier--Bohr results to the diffraction. We start by showing that the uniform existence of the Fourier--Bohr coefficient implies that the intensity of the Bragg peak at that point is the square of the absolute value of the Fourier--Bohr coefficient. For $G=\RR^d$ this result is well known, see \cite{Hof2}. For uniquely ergodic dynamical systems the result also follows from \cite[Thm.~5]{Len}.

\begin{theorem}\label{thm: phase consistency property} Let $\mu \in \cM^\infty(G)$ let $\cA$ be a van Hove sequence, and let $\gamma$ be the autocorrelation of $\mu$ with respect to $\cA$. If for some $\chi \in \widehat{G}$ the Fourier--Bohr coefficient $a_\chi(\mu)$ exists uniformly, then we get
\begin{displaymath}
\widehat{\gamma}(\{ \chi \}) = \left| a_\chi(\mu) \right|^2 \,.
\end{displaymath}

In particular, if $\mu$ has uniform Fourier--Bohr spectrum, then $\mu$ satisfies CPP \eqref{eqn CPP}.
\qed
\end{theorem}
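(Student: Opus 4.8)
The plan is to identify the autocorrelation with an Eberlein convolution and then read off the point mass of its Fourier transform directly from the Fourier--Bohr machinery of Section~\ref{sec 5}. Since $\gamma$ is the autocorrelation of $\mu$ with respect to $\cA$, the lemma relating autocorrelation to Eberlein convolution gives $\gamma = \mu \circledast_{\cA} \widetilde{\mu}$; in particular this Eberlein convolution exists. Applying Corollary~\ref{cor 1}(b) with $\nu = \widetilde{\mu}$, the coefficient $c_\chi(\mu \circledast_{\cA} \widetilde{\mu})$ exists uniformly and
$$
\widehat{\gamma}(\{ \chi \}) = c_\chi(\mu \circledast_{\cA} \widetilde{\mu}) \,.
$$
Thus the whole statement reduces to evaluating this single Fourier--Bohr coefficient.

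To evaluate it, I would invoke Theorem~\ref{lem:FB coefficiceints 6} with $\nu = \mu$: its hypotheses are exactly that $\mu \circledast_{\cA} \widetilde{\mu}$ exists (which we have) and that $c_\chi(\mu)$ exists uniformly (our assumption). In the case $c_\chi(\mu)=0$, part (a) yields $c_\chi(\mu \circledast_{\cA} \widetilde{\mu}) = 0 = |c_\chi(\mu)|^2$. In the case $c_\chi(\mu)\neq 0$, part (b) yields $c_\chi(\mu \circledast_{\cA} \widetilde{\mu}) = c_\chi(\mu)\,\overline{c_\chi^{\cA}(\mu)}$. Here the uniform existence of $c_\chi(\mu)$, via Lemma~\ref{lemma: FB uniform indep van hove}, identifies $c_\chi^{\cA}(\mu)$ with the uniform value $c_\chi(\mu)$, so the product collapses to $c_\chi(\mu)\,\overline{c_\chi(\mu)} = |c_\chi(\mu)|^2$. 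Combining both cases with the displayed identity gives $\widehat{\gamma}(\{ \chi \}) = |c_\chi(\mu)|^2$.

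For the final clause, I would simply observe that if \emph{all} Fourier--Bohr coefficients of $\mu$ exist uniformly, then the identity above holds simultaneously for every $\chi \in \widehat{G}$, which is precisely condition \eqref{eqn CPP} defining CPP. There is no genuine obstacle here beyond bookkeeping, since the content is carried entirely by Corollary~\ref{cor 1}(b) and Theorem~\ref{lem:FB coefficiceints 6}. The one point requiring care is the consistent use of the convention $c_\chi^{\cA}(\mu) = c_\chi(\mu)$ under uniform existence, so that the conjugated factor produced in part (b) is genuinely the same number as $c_\chi(\mu)$ and not a possibly distinct subsequential value.
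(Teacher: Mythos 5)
Your proposal is correct and follows exactly the route the paper intends: the theorem is stated with a \qed precisely because it is the specialization $\nu=\mu$ of Theorem~\ref{lem:FB coefficiceints 6} (equivalently Theorem~\ref{lem:FB coefficiceints 5}), combined with Corollary~\ref{cor 1}(b) and the identification $\gamma=\mu\circledast_{\cA}\widetilde{\mu}$. Your handling of the two cases $c_\chi(\mu)=0$ and $c_\chi(\mu)\neq 0$, and the identification $c_\chi^{\cA}(\mu)=c_\chi(\mu)$ under uniform existence, matches the paper's machinery with no gaps.
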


\medskip

It follows from Thm.~\ref{thm: phase consistency property} that, for measures with uniform Fourier--Bohr spectrum, null Fourier--Bohr spectrum is equivalent to continuous diffraction spectrum.

\begin{coro}\label{cor2} Let $\mu \in \cM^\infty(G)$ be a measure with uniform Fourier--Bohr spectrum. Then, the following are equivalent.
\begin{itemize}
  \item[(i)] The measure $\mu$ has null Fourier--Bohr spectrum with respect one van Hove sequence $\cA$.
  \item[(ii)] The measure $\mu$ has null Fourier--Bohr spectrum with respect all van Hove sequences.
  \item[(iii)] There exists an autocorrelation $\gamma$ of $\mu$ such that $\widehat{\gamma}_{\mathsf{pp}}=0$.
  \item[(iv)] For every autocorrelation $\gamma$ of $\mu$ we have $\widehat{\gamma}_{\mathsf{pp}}=0$.
\end{itemize}
\qed
\end{coro}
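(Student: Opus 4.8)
The plan is to establish the chain of equivalences $(i) \Leftrightarrow (ii) \Leftrightarrow (iii) \Leftrightarrow (iv)$ by reducing everything to the characterization of Bragg peaks provided by Thm.~\ref{thm: phase consistency property}. The central observation is that the hypothesis---that all Fourier--Bohr coefficients $c_\chi(\mu)$ exist \emph{uniformly}---makes the Fourier--Bohr spectrum $\FB(\mu)$ independent of the choice of van Hove sequence, by Lemma~\ref{lemma: FB uniform indep van hove}. This immediately handles the equivalence $(i) \Leftrightarrow (ii)$: null Fourier--Bohr spectrum with respect to one van Hove sequence means $c_\chi(\mu) = 0$ for all $\chi$ with respect to $\cA$, and since these coefficients are the \emph{same} for every van Hove sequence, this forces null spectrum with respect to all of them, and conversely.

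For the link between the Fourier--Bohr side and the diffraction side, I would invoke Thm.~\ref{thm: phase consistency property}, which under the uniform-existence hypothesis gives, for any autocorrelation $\gamma$ of $\mu$ (with respect to any van Hove sequence $\cA$ along which it exists),
\begin{displaymath}
\widehat{\gamma}(\{ \chi \}) = \left| c_\chi(\mu) \right|^2 \,.
\end{displaymath}
Since $\widehat{\gamma}_{pp} = 0$ is equivalent to $\widehat{\gamma}(\{\chi\}) = 0$ for all $\chi \in \widehat{G}$, the right-hand identity shows that $\widehat{\gamma}_{pp} = 0$ holds if and only if $c_\chi(\mu) = 0$ for all $\chi$, i.e. if and only if $\mu$ has null Fourier--Bohr spectrum. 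Applying this for a single autocorrelation yields $(iii) \Leftrightarrow (i)$, and applying it simultaneously to every autocorrelation yields $(iv) \Leftrightarrow (ii)$; the implication $(iv) \Rightarrow (iii)$ is trivial once one knows an autocorrelation exists.

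The one genuine subtlety I would be careful about is the existence of autocorrelations and the fact that the \emph{values} $c_\chi(\mu)$ do not depend on the van Hove sequence while \emph{different} van Hove sequences may a priori produce different autocorrelations. Here the uniform existence of the coefficients is exactly what rescues the argument: because $c_\chi(\mu)$ is the same constant for all van Hove sequences, the formula $\widehat{\gamma}(\{\chi\}) = |c_\chi(\mu)|^2$ pins down the pure point part of \emph{every} autocorrelation to the same value, so $\widehat{\gamma}_{pp} = 0$ for one autocorrelation forces it for all. This is precisely what makes $(iii)$ and $(iv)$ equivalent rather than merely related by implication, and it is the step where the uniformity hypothesis is doing the real work. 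Everything else is a routine unwinding of definitions, so I expect essentially no computational obstacle beyond stating the equivalences cleanly.
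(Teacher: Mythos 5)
Your proposal is correct and follows exactly the route the paper intends for this corollary (which it leaves as an immediate consequence): Lemma~\ref{lemma: FB uniform indep van hove} gives $(i)\Leftrightarrow(ii)$, and the identity $\widehat{\gamma}(\{\chi\})=|c_\chi(\mu)|^2$ from Thm.~\ref{thm: phase consistency property}, applied to every autocorrelation, gives the equivalence with $(iii)$ and $(iv)$. The only point you flag but do not fully close is the existence of at least one autocorrelation, needed for $(i)\Rightarrow(iii)$; this does not follow from uniformity of the coefficients but from Prop.~\ref{prop ebe}(b), which supplies an autocorrelation of $\mu$ along a subsequence of any van Hove sequence.
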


\smallskip

Let us also note in passing that if the Fourier--Bohr spectrum exists uniformly, then the Bragg diffraction spectrum is independent of the choice of the van Hove sequence.

\begin{coro}Let $\mu \in \cM^\infty(G)$ be a measure with uniform Fourier--Bohr spectrum. Let $\gamma_1, \gamma_2$ be autocorrelations of $\mu$ with respect to the van Hove sequences $\cA$ and $\cB$, respectively. Then
\[
(\widehat{\gamma_1})_{\mathsf{pp}} =(\widehat{\gamma_2})_{\mathsf{pp}}   \,.
\]
Moreover, if $\gamma$ is the autocorrelation of $\mu$ with respect to some van Hove sequence $\cA$, the Bragg diffraction spectrum $\mathbb{B}_{\cA}(\mu)$ does not depend on $\cA$ and satisfies
\[
\FB(\mu)=\mathbb{B}_{\cA}(\mu):=\{ \chi \in \widehat{G}: \widehat{\gamma}(\{\chi \}) \neq 0 \} \,.
\]
\qed
\end{coro}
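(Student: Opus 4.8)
The plan is to reduce both assertions to Theorem~\ref{thm: phase consistency property}, which identifies each point mass of the diffraction with the modulus squared of the corresponding Fourier--Bohr coefficient, together with the van~Hove independence of the uniform Fourier--Bohr coefficients supplied by Lemma~\ref{lemma: FB uniform indep van hove}. Fix $\chi \in \widehat{G}$. Since $c_\chi(\mu)$ exists uniformly, Lemma~\ref{lemma: FB uniform indep van hove} yields the same value $c_\chi(\mu)$ whether it is computed along $\cA$ or along $\cB$, so I may treat $c_\chi(\mu)$ as a single well-defined number throughout.

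For the first claim I would apply Theorem~\ref{thm: phase consistency property} twice: to the autocorrelation $\gamma_1$ of $\mu$ with respect to $\cA$, obtaining $\widehat{\gamma_1}(\{ \chi \}) = |c_\chi(\mu)|^2$, and to the autocorrelation $\gamma_2$ of $\mu$ with respect to $\cB$, obtaining $\widehat{\gamma_2}(\{ \chi \}) = |c_\chi(\mu)|^2$. Hence $\widehat{\gamma_1}(\{ \chi \}) = \widehat{\gamma_2}(\{ \chi \})$ for every $\chi \in \widehat{G}$. Because $\widehat{\gamma_1}$ and $\widehat{\gamma_2}$ are positive measures, their pure point parts are the atomic measures $(\widehat{\gamma_j})_{pp} = \sum_{\chi} \widehat{\gamma_j}(\{ \chi \})\, \delta_\chi$ and are therefore completely determined by the point masses. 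As these masses coincide for $j=1,2$, I conclude $(\widehat{\gamma_1})_{pp} = (\widehat{\gamma_2})_{pp}$.

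For the second claim, the same application of Theorem~\ref{thm: phase consistency property} to an autocorrelation $\gamma$ of $\mu$ with respect to an arbitrary van Hove sequence $\cA$ gives $\widehat{\gamma}(\{ \chi \}) = |c_\chi(\mu)|^2$. Thus $\widehat{\gamma}(\{ \chi \}) \neq 0$ holds exactly when $c_\chi(\mu) \neq 0$, which is the defining condition for $\chi \in \FB(\mu)$; this identifies $B_{\cA}(\mu)$ with $\FB(\mu)$. Since $\FB(\mu)$ is independent of the chosen van Hove sequence (again by Lemma~\ref{lemma: FB uniform indep van hove}), the Bragg spectrum $B_{\cA}(\mu)$ is independent of $\cA$ as well.

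I expect no genuine obstacle: the entire content is carried by Theorem~\ref{thm: phase consistency property}, and the only step needing a moment of care is the elementary remark that the pure point part of a positive measure is determined by its values on singletons, so that pointwise equality of the atoms forces equality of the pure point components.
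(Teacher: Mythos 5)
Your proposal is correct and follows exactly the route the paper intends: the corollary is stated as an immediate consequence of Theorem~\ref{thm: phase consistency property}, and your argument simply spells this out, using Lemma~\ref{lemma: FB uniform indep van hove} to see that $|c_\chi(\mu)|^2$ is the common value of all the atoms $\widehat{\gamma_j}(\{\chi\})$ and the elementary fact that a positive measure's pure point part is determined by its point masses. Nothing is missing, and the care you take with the atomic-determination step is exactly the right (and only) detail worth noting.
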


The following example shows that in this situation, the continuous diffraction spectrum can depend on the van Hove sequence, even if the Fourier--Bohr spectrum exists uniformly.

\begin{example} Consider the Thue-Morse measure $\omega$ \cite[Sect.4.6]{TAO}. Define
$$
\mu= \delta_{\ZZ} + \omega|_{[0, \infty)} \,,
$$
that is $\mu= \sum_{n \in \ZZ} c(n) \delta_n$ where
$$
c(n)=
\left\{
\begin{array}{c c}
  1 &\mbox{ if } n \in \ZZ \mbox{ and } n< 0 \\
  1+\omega(\{n\}) &\mbox{ if } n \in \ZZ \mbox{ and } n \geq  0\\
\end{array}
\right.
$$
Now, the Fourier--Bohr coefficients of $\omega$ exist uniformly. Moreover, since $\omega$ has (singular) continuous diffraction spectrum \cite[Thm.~10.1]{TAO}, all the Fourier--Bohr coefficients of $\omega$ are zero.

It follows immediately that the Fourier--Bohr coefficients $a_\chi(\mu)$ exist uniformly and
$$
a_\chi(\mu)=
\left\{
\begin{array}{c c}
  1 &\mbox{ if } \chi \in \ZZ\\
  0 &\mbox{ otherwise} \,.
\end{array}
\right.
$$
In particular, if the autocorrelation $\gamma_{\cA}$ of $\mu$ exists with respect to some van Hove sequence $\cA$ then
$$
(\reallywidehat{\gamma_{\cA}})_{\mathsf{pp}}= \delta_{\ZZ} \,.
$$
Now, consider the van Hove sequences $B_{n}=[-n,n^2]$ and $C_n=[-n^2,n]$. Then, an easy computation shows that the autocorrelations $\gamma_{\cB}$ and $\gamma_{\cC}$ of $\mu$ exist with respect to these sequences and
\begin{align*}
  \gamma_{\cB} &=\delta_{\ZZ}+\gamma_{\rm TM} \\
  \gamma_{\cC}&=\delta_{\ZZ} \,,
\end{align*}
where $\gamma_{\rm TM}$ is the measure from \cite[Thm.~10.1]{TAO}.

In particular, we have
\begin{align*}
 (\reallywidehat{ \gamma_{\cB}})_{\mathsf{c}} &=\widehat{\gamma_{\rm TM}} \neq 0 \\
  (\reallywidehat{ \gamma_{\cC}})_{\mathsf{c}}&= 0  \,.
\end{align*}

\end{example}

\section{Orthogonality with respect to twisted Eberlein convolution}\label{sec 6}

In this section we establish the first orthogonality with respect to the twisted Eberlein convolution. The key for this result will be Prop.~\ref{prop 1} and Prop.~\ref{prop 3} below.

\begin{prop}\label{prop 1} Let $\mu, \nu \in \cM^\infty$ and $\cA$ a van Hove sequence so that
\begin{itemize}
  \item{} $\ltebe \mu ,\nu \rtebeA$ exists.
  \item{} The Fourier--Bohr coefficients $a_{\chi}(\mu)$ exist uniformly for all $\chi \in \widehat{G}$ and satisfy
\[
  a_\chi(\mu)=0 \quad \forall \chi \in \widehat{G} \,.
\]
\end{itemize}
Then, $\reallywidehat{\ltebe \mu ,\nu \rtebeA}$ is a continuous measure.
\end{prop}

\begin{proof}
By Thm.~\ref{thm:FB-coefficiceints3}, $a_{\chi}(\ltebe \mu ,\nu \rtebeA)$ exist and
\[
a_{\chi}(\ltebe \mu ,\nu \rtebeA) =0 \qquad \forall \chi \in \widehat{G}\,.
\]
Then, $\ltebe \mu, \nu \rtebeA$ is Fourier transformable by \cite[Cor.~5.2]{LSS3}. Therefore, by \cite[Thm.~4.10.14]{MoSt} we have
\[
\reallywidehat{ \ltebe \mu ,\nu \rtebeA}(\{ \chi \})=a_{\chi}(\ltebe \mu ,\nu \rtebeA) =0 \qquad \forall \chi \in \widehat{G} \,.
\]
\end{proof}

As an immediate consequence we get the following result.

\begin{coro}Let $\mu \in \cM^\infty(G)$ and $\cA$ a van Hove sequence so that
\begin{itemize}
  \item{} The autocorrelation $\gamma$ of $\mu$ exists and $\widehat{\gamma}$ is continuous.
  \item{} The Fourier--Bohr coefficients $a_{\chi}(\mu)$ exist uniformly for all $\chi \in \widehat{G}$.
\end{itemize}
Let $\nu \in \cM^\infty(G)$ be any measure and $\cA'$ be any subsequence of $\cA$ such that $\ltebe \mu, \nu\rtebe_{\cA'}$  exists.
Then  $\reallywidehat{\ltebe \mu ,\nu \rtebe_{\cA'}}$ is a continuous measure.
\end{coro}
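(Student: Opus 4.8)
The plan is to reduce the statement to Proposition~\ref{prop 1} by verifying that its two hypotheses hold for the triple $(\mu, \nu, \cA')$. The only hypothesis of Proposition~\ref{prop 1} that is not already among the assumptions is the vanishing of all Fourier--Bohr coefficients of $\mu$; everything else is either given outright or follows trivially from passing to the subsequence.

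First I would establish that $c_\chi(\mu) = 0$ for every $\chi \in \widehat{G}$. Since the Fourier--Bohr coefficient $c_\chi(\mu)$ exists uniformly and $\gamma$ is the autocorrelation of $\mu$ with respect to $\cA$, Theorem~\ref{thm: phase consistency property} yields the phase-consistency identity $\widehat{\gamma}(\{\chi\}) = |c_\chi(\mu)|^2$. By hypothesis $\widehat{\gamma}$ is a continuous measure, so $\widehat{\gamma}(\{\chi\}) = 0$ for all $\chi$, which forces $c_\chi(\mu) = 0$.

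Next I would transfer the uniform existence of the Fourier--Bohr coefficients from $\cA$ to the subsequence $\cA'$. A subsequence of a van Hove sequence is again a van Hove sequence, so by Lemma~\ref{lemma: FB uniform indep van hove} the coefficients $c_\chi(\mu)$ exist uniformly with respect to $\cA'$ and take the same (zero) value. Combined with the assumed existence of $\mu \circledast_{\cA'} \tilde{\nu}$, this shows that $(\mu, \nu, \cA')$ satisfies both hypotheses of Proposition~\ref{prop 1}, and applying that proposition directly gives that $\reallywidehat{\mu \circledast_{\cA'} \tilde{\nu}}$ is continuous.

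There is essentially no genuine obstacle here; the content is entirely contained in combining the phase-consistency identity of Theorem~\ref{thm: phase consistency property} with the continuity of $\widehat{\gamma}$, which is precisely what rules out any atom of $\widehat{\gamma}$ and hence any nonzero Fourier--Bohr coefficient. The only point requiring a moment's care is ensuring that the uniform vanishing of the coefficients persists along the subsequence $\cA'$, and this is immediate from Lemma~\ref{lemma: FB uniform indep van hove}.
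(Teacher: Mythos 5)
Your proof is correct and follows essentially the same route as the paper: the paper simply cites Cor.~\ref{cor2} (which packages exactly your argument via Thm.~\ref{thm: phase consistency property}) to conclude that $\mu$ has null Fourier--Bohr spectrum, and then applies Prop.~\ref{prop 1}. Your additional remark that uniform existence of the coefficients transfers to the subsequence $\cA'$ by Lemma~\ref{lemma: FB uniform indep van hove} is a point the paper leaves implicit, and it is handled correctly.
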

\begin{proof}
By Cor.~\ref{cor2} $\mu$ has null Fourier--Bohr spectrum. The claim follows from Prop.~\ref{prop 1}.
\end{proof}

\smallskip

Using the $\sim$-symmetry and Cor.~\ref{fb:prop} we also get the following twin result.

\begin{prop}\label{prop 3}Let $\mu, \nu \in \cM^\infty$ and $\cA$ a van Hove sequence so that
\begin{itemize}
  \item{} $\ltebe \mu ,\nu \rtebeA$ exists.
  \item{} The Fourier--Bohr coefficients $a_{\chi}(\nu)$ exist uniformly for all $\chi \in \widehat{G}$ and satisfy
\[
  a_\chi(\nu)=0 \quad \forall \chi \in \widehat{G} \,.
\]
\end{itemize}
Then, $\reallywidehat{\ltebe \mu ,\nu \rtebeA}$ is a continuous measure. \qed
\end{prop}

The equivalent results for functions are true and can be proven exactly  the same way. We list them here.

\begin{lemma} Let $f,g \in \Cu(G)$ and $\cA$ be so that $\ltebe f, g \rtebeA$ exists.
\begin{itemize}
  \item[(a)] If $f$ has uniform null Fourier--Bohr spectrum, then $\ltebe f, g \rtebeA$ has uniform null Fourier--Bohr spectrum.
  \item[(b)] If $f$ has uniform null Fourier--Bohr spectrum, then $\ltebe f, g \rtebeA$ has uniform null Fourier--Bohr spectrum.
\end{itemize}
\end{lemma} \qed

\medskip

We can now prove the following result, which is one of the main results in the paper.

\begin{theorem}\label{thm: main} Let $\mu, \nu \in \cM^\infty(G)$ and $\cA$ a van Hove sequence with the following properties:
\begin{itemize}
  \item {} The autocorrelation $\gamma_{\mu}$ of $\mu$ exists with respect to $\cA$ and $\reallywidehat{\gamma_{\mu}}$ is pure point.
  \item {} The autocorrelation $\gamma_{\nu}$ of $\nu$ exists with respect to $\cA$ and $\reallywidehat{\gamma_{\nu}}$ is continuous.
  \item {} The Fourier--Bohr coefficients $a_\chi(\nu)$ exist uniformly for all $\chi \in \widehat{G}$.
\end{itemize}
Then,
\[
\ltebe \mu, \nu \rtebeA =\ltebe \nu, \mu \rtebeA=0 \,.
\]
In particular, for all $a,b \in \CC$ the autocorrelation $\gamma_{\omega}$ of $\omega=a\mu+b\nu$ exists with respect to $\cA$ and
\begin{align*}
  \left(\reallywidehat{\gamma_{\omega}}\right)_{\mathsf{pp}} & = |a|^2 \reallywidehat{\gamma_{\mu}} \\
  \left(\reallywidehat{\gamma_{\omega}}\right)_{\mathsf{c}} & = |b|^2 \reallywidehat{\gamma_{\nu}} \,.
\end{align*}
\end{theorem}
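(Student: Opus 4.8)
The plan is to prove the single identity $\mu \circledast_{\cA} \tilde{\nu} = 0$; the companion identity $\nu \circledast_{\cA} \tilde{\mu} = 0$ then follows immediately from Lemma~\ref{lem 1}(e), since $\nu \circledast_{\cA} \tilde{\mu} = \widetilde{\mu \circledast_{\cA} \tilde{\nu}}$. The first preparatory step is to observe that $\nu$ has null Fourier--Bohr spectrum. Indeed, since $c_\chi(\nu)$ exists uniformly for every $\chi$ and $\gamma_\nu$ exists with respect to $\cA$, Thm~\ref{thm: phase consistency property} gives $\widehat{\gamma_\nu}(\{\chi\}) = |c_\chi(\nu)|^2$; as $\widehat{\gamma_\nu}$ is continuous its value at every singleton vanishes, so $c_\chi(\nu) = 0$ for all $\chi \in \widehat{G}$.

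The heart of the argument is to show that the Eberlein convolution, once it exists along a (sub)sequence, is simultaneously strongly almost periodic and has continuous Fourier transform, hence is zero. The continuity of $\reallywidehat{\mu \circledast_{\cA} \tilde{\nu}}$ is Thm~\ref{lem:FB coefficiceints 5}(a) together with Cor~\ref{cor 1}(b): because $c_\chi(\nu) = 0$ for all $\chi$, every coefficient $c_\chi(\mu \circledast_{\cA} \tilde{\nu}) = \reallywidehat{\mu \circledast_{\cA} \tilde{\nu}}(\{\chi\})$ vanishes. For the strong almost periodicity I would use that pure point diffractivity of $\mu$ is equivalent to $\mu$ being mean almost periodic with respect to $\cA$ \cite{LSS,LSS2}, so that $\mu \in \MAP_{\cA}(G)$; then Thm~\ref{thm:map implies sap} yields $\mu \circledast_{\cA} \tilde{\nu} \in \SAP(G)$, whence $\reallywidehat{\mu \circledast_{\cA} \tilde{\nu}}$ is pure point. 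A pure point measure that assigns $0$ to every singleton is the zero measure, so $\reallywidehat{\mu \circledast_{\cA} \tilde{\nu}} = 0$ and therefore $\mu \circledast_{\cA} \tilde{\nu} = 0$.

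The one gap to close is that Thm~\ref{thm:map implies sap} presupposes existence of the Eberlein convolution along the full sequence $\cA$, which is not among the hypotheses. I would handle this by a compactness argument based on Prop~\ref{prop ebe}: the approximants $\frac{1}{|A_n|}\,\mu|_{A_n} * \tilde{\nu}|_{-A_n}$ all lie in a fixed vaguely compact metrisable set, so it suffices to show that every vaguely convergent subsequence has limit $0$. Along any such subsequence $\cA'$ the convolution $\mu \circledast_{\cA'} \tilde{\nu}$ exists, and the two inputs above transfer: $c_\chi(\nu) = 0$ still holds uniformly along $\cA'$, and $\mu \in \MAP_{\cA}(G)$ forces $\mu \in \MAP_{\cA'}(G)$ because the Besicovitch seminorm only decreases when passing to a subsequence, so the relevant $\epsilon$-sets remain relatively dense. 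Running the argument of the previous paragraph along $\cA'$ gives $\mu \circledast_{\cA'} \tilde{\nu} = 0$, so the full vague limit exists and equals $0$. I expect this transfer-to-subsequences step, combined with invoking the pure point $\Leftrightarrow$ mean almost periodic characterisation, to be the main obstacle; the Fourier--Bohr bookkeeping is then routine.

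Finally, for the displayed consequence, expanding $\omega = a\mu + b\nu$ exactly as in \eqref{aut lc} and substituting $\mu \circledast_{\cA} \tilde{\nu} = \nu \circledast_{\cA} \tilde{\mu} = 0$ shows that the autocorrelation $\gamma_\omega$ exists along $\cA$ and equals $|a|^2 \gamma_\mu + |b|^2 \gamma_\nu$. Taking Fourier transforms and using that $\widehat{\gamma_\mu}$ is pure point while $\widehat{\gamma_\nu}$ is continuous, the uniqueness of the Lebesgue decomposition gives $(\widehat{\gamma_\omega})_{pp} = |a|^2 \widehat{\gamma_\mu}$ and $(\widehat{\gamma_\omega})_{c} = |b|^2 \widehat{\gamma_\nu}$.
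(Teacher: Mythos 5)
Your proposal is correct and follows essentially the same route as the paper's proof: reduction via Lemma~\ref{lem 1}(e) and the compactness argument of Prop.~\ref{prop ebe} to subsequences along which the convolution exists, null Fourier--Bohr spectrum of $\nu$ via Thm.~\ref{thm: phase consistency property}, vanishing of all Fourier--Bohr coefficients of the convolution via Thm.~\ref{lem:FB coefficiceints 5} and Cor.~\ref{cor 1}, strong almost periodicity via $\mu \in \MAP_{\cA}(G)$ and Thm.~\ref{thm:map implies sap}, and the conclusion from \eqref{aut lc}. The only cosmetic difference is the last step: the paper concludes directly from $\SAP(G) \cap \WAP_0(G) = \{0\}$ (Thm.~\ref{Thm. 4.10.10}), whereas you pass to the Fourier side (pure point transform with vanishing singletons) and implicitly invoke injectivity of the Fourier transform on transformable measures --- a standard fact, but one the direct-sum argument avoids; your explicit justification of the transfer of mean almost periodicity to subsequences is a point the paper glosses over.
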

\begin{proof}
By \cite[Cor.~4.7 and Lemma~4.13]{LSS3} to show that $\ltebe \mu, \nu \rtebeA =0$, it suffices to show that for all subsequences $\cB$ of $\cA$ for which  $\ltebe \mu, \nu \rtebe_{\cB}$ exists, it is zero.

Let $\cB$ be any subsequence of $\cA$ such that $\ltebe \mu, \nu \rtebe_{\cB}$ exists. Since the Fourier--Bohr coefficients $a_\chi(\nu)$ exist uniformly and $\reallywidehat{\gamma_{\nu}}$ is continuous, $\nu$ has null Fourier--Bohr spectrum by Cor.~\ref{cor2} .  Therefore, by  Prop.~\ref{prop 1} we have $\ltebe \mu, \nu \rtebe_{\cB} \in \WAP_0(G)$.

Moreover, by \cite[Thm.~2.13]{LSS}, since $\reallywidehat{\gamma_{\mu}}$ is pure point, we have $\mu \in \MAP_{\cA}(G)$, and hence $\ltebe \mu, \nu \rtebe_{\cB} \in \SAP(G)$ by\cite[Thm~4.20]{LSS3}. Therefore, by \cite[Thm.~4.10.10]{MoSt} we have
\[
\ltebe \mu, \nu \rtebe_{\cB} \in \SAP(G) \cap \WAP_0(G) = \{ 0\} \,.
\]
The last claim follows from \eqref{aut lc}.
\end{proof}

\medskip

Similarly we get the following result. Note here that since the (CPP) may not hold for $\nu$, we cannot relate the null Fourier--Bohr spectrum of $\nu$ with its diffraction spectra.

Since the proof is almost identical to the one of Thm.~\ref{thm: main}, we skip it.

\begin{prop}\label{prop 2} Let $\mu, \nu \in \cM^\infty(G)$ and $\cA$ a van Hove sequence with the following properties:
\begin{itemize}
  \item {} The autocorrelation $\gamma_{\mu}$ of $\mu$ exists with respect to $\cA$ and $\reallywidehat{\gamma_{\mu}}$ is pure point.
  \item {} The Fourier--Bohr coefficients $a_\chi(\mu)$ exist uniformly for all $\chi \in \widehat{G}$.
  \item {} For all $\chi \in \FB(\mu)$ the Fourier--Bohr coefficient $a_\chi^{\cA}(\nu)$ exists and $a_\chi^{\cA}(\nu)=0$.
\end{itemize}
Then, the following twisted Eberlein convolutions exist and
\[
\ltebe \mu, \nu \rtebeA=\ltebe \nu, \mu \rtebeA=0 \,.
\]\qed
\end{prop}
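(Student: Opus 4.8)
The plan is to re-run the proof of Theorem~\ref{thm: main} almost verbatim, with Proposition~\ref{prop 3} playing the role that Proposition~\ref{prop 1} played there. First, by Lemma~\ref{lem 1}(e) it suffices to show that $\mu \circledast_{\cA} \tilde{\nu}$ exists and equals $0$, since then $\nu \circledast_{\cA} \tilde{\mu} = \widetilde{\mu \circledast_{\cA} \tilde{\nu}} = 0$ as well. To obtain existence together with vanishing, I would appeal to Proposition~\ref{prop ebe}: the averages $\frac{1}{|A_n|}\mu|_{A_n} * \tilde{\nu}|_{-A_n}$ lie in a vaguely compact metrisable set, so it is enough to check that every vague cluster point is $0$; equivalently, that $\mu \circledast_{\cB} \tilde{\nu} = 0$ for every subsequence $\cB$ of $\cA$ along which this Eberlein convolution happens to exist.

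So fix such a subsequence $\cB$. The heart of the argument is to verify that the hypotheses of Proposition~\ref{prop 3} transfer from $\cA$ to $\cB$. Because $c_\chi(\mu)$ exists uniformly for every $\chi$, Lemma~\ref{lemma: FB uniform indep van hove} gives that $\FB(\mu)$ is the same for every van Hove sequence, so $\FB_{\cB}(\mu) = \FB(\mu)$. Next, for each $\chi \in \FB(\mu)$ the limit defining $c_\chi^{\cA}(\nu)$ exists and equals $0$; since $\cB$ is a subsequence of $\cA$, the corresponding averages form a subsequence of a convergent sequence, so $c_\chi^{\cB}(\nu)$ exists and equals $0$ too. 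Proposition~\ref{prop 3} applied with the sequence $\cB$ therefore shows that $\reallywidehat{\mu \circledast_{\cB} \tilde{\nu}}$ is continuous, i.e.\ $\mu \circledast_{\cB} \tilde{\nu} \in \WAP_0(G)$.

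For the complementary inclusion, I would use that $\widehat{\gamma_{\mu}}$ is pure point, which by the equivalence of pure point diffraction with mean almost periodicity \cite{LSS} gives $\mu \in \MAP_{\cA}(G)$. Mean almost periodicity passes to subsequences: from $\| \cdot \|_{b,1,\cB} \leq \| \cdot \|_{b,1,\cA}$ one sees that every relatively dense set of $\epsilon$-almost periods for $\cA$ is contained in the one for $\cB$, so $\mu \in \MAP_{\cB}(G)$ as well. Theorem~\ref{thm:map implies sap} then yields $\mu \circledast_{\cB} \tilde{\nu} \in \SAP(G)$, and combining with the previous paragraph and the Eberlein decomposition (Theorem~\ref{Thm. 4.10.10}) gives
$$
\mu \circledast_{\cB} \tilde{\nu} \in \SAP(G) \cap \WAP_0(G) = \{ 0 \} \,,
$$
as required.

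The one point that is genuinely different from Theorem~\ref{thm: main}, and which I expect to be the main obstacle to watch, is the transfer of the vanishing of $c_\chi(\nu)$ to the subsequence $\cB$: here $c_\chi^{\cA}(\nu)$ is only assumed to exist, not to exist uniformly, so one cannot simply invoke van Hove independence as in Theorem~\ref{thm: main}. The resolution is the elementary remark that a convergent sequence restricts to a subsequence with the same limit, which is precisely why mere existence (rather than uniform existence) of the Fourier--Bohr coefficients of $\nu$ suffices in this statement.
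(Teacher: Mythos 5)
Your proposal is correct and takes essentially the same approach as the paper, whose proof of this proposition is literally stated as ``identical to the one of Thm.~\ref{thm: main}'' --- that is, the same $\SAP(G)\cap\WAP_0(G)=\{0\}$ argument with Prop.~\ref{prop 3} substituted for Prop.~\ref{prop 1}, exactly as you do. The two subsequence-transfer points you single out (that $c_\chi^{\cB}(\nu)$ exists and vanishes because a subsequence of a convergent sequence keeps its limit, and that mean almost periodicity along $\cA$ passes to $\cB$) are precisely the steps the paper leaves implicit, and your resolutions of them are correct.
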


Note in here that Prop.~\ref{prop 2} only requires the existence of the Fourier--Bohr coefficients of $\nu$ only for $\chi \in \FB(\mu)$.
In particular, if the Fourier--Bohr spectrum of $\nu$  exists, we get:

\begin{coro} Let $\mu, \nu \in \cM^\infty(G)$ and $\cA$ a van Hove sequence with the following properties:
\begin{itemize}
  \item {} The autocorrelation $\gamma_{\mu}$ of $\mu$ exists with respect to $\cA$ and $\reallywidehat{\gamma_{\mu}}$ is pure point.
  \item {} The measure $\mu$ has uniform Fourier--Bohr spectrum.
  \item {}  $\nu$ has null Fourier--Bohr spectrum with respect to $\cA$.
\end{itemize}
Then, the following twisted Eberlein convolutions exist and
\[
\ltebe \mu, \nu \rtebeA=\ltebe \nu, \mu \rtebeA=0 \,.
\]\qed
\end{coro}

Let $\nu$ be a measure whose autocorrelation and Fourier--Bohr spectrum exist with respect to $\cA$ and such that CPP hold. Then $\nu$ has null Fourier--Bohr spectrum if and only if
the diffraction spectrum is continuous. Therefore, we have:

\begin{coro}\label{cor 2} Let $\mu, \nu \in \cM^\infty(G)$ and $\cA$ a van Hove sequence with the following properties:
\begin{itemize}
  \item {} The autocorrelation $\gamma_{\mu}$ of $\mu$ exists with respect to $\cA$ and $\widehat{\gamma_{\mu}}$ is pure point.
  \item {} The autocorrelation $\gamma_{\nu}$ of $\nu$ exists with respect to $\cA$ and $\widehat{\gamma_{\nu}}$ is continuous.
  \item {} The Fourier--Bohr coefficients $a_\chi(\mu)$ exist uniformly for all $\chi \in \widehat{G}$.
  \item {}  $\nu$ satisfies the CPP.
\end{itemize}
Then, the following Eberlein convolutions exist and
\[
\ltebe \mu, \nu \rtebeA=\ltebe \nu, \mu \rtebeA=0 \,.
\]\qed
\end{coro}

\medskip

Let us complete the section by showing that any measure $\mu$ which satisfies the conditions of Prop.~\ref{prop 2} is Besicovitch almost periodic.

\begin{lemma}\label{lem3} Let $\mu$ be a measure so that  autocorrelation $\gamma_{\mu}$ of $\mu$ exists with respect to $\cA$ and $\reallywidehat{\gamma_{\mu}}$ is pure point.
  If the Fourier--Bohr coefficients $a_\chi(\mu)$ exist uniformly for all $\chi \in \widehat{G}$, then $\mu \in \Bap_{\cA}(G)$.
\end{lemma}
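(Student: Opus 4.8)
The plan is to recognize that the statement is precisely a repackaging of the characterization of Besicovitch almost periodicity in Theorem~\ref{thm Bap char}. That theorem asserts that $\mu \in \Bap_{\cA}(G)$ holds if and only if three conditions are met: (i) the autocorrelation $\gamma$ of $\mu$ exists with respect to $\cA$ and $\widehat{\gamma}$ is pure point; (ii) the Fourier--Bohr coefficient $c_\chi^{\cA}(\mu)$ exists for every $\chi \in \widehat{G}$; and (iii) the consistent phase property \eqref{eqn CPP} holds. So the whole proof reduces to checking these three items against the hypotheses.

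First I would dispose of (i) and (ii), which are essentially free. Condition (i) is assumed verbatim in the statement of the lemma. For (ii), the hypothesis gives that $c_\chi(\mu)$ exists \emph{uniformly} for every $\chi \in \widehat{G}$; uniform existence trivially implies existence of the limit, so $c_\chi^{\cA}(\mu)$ exists for all $\chi$, and (ii) holds. Neither step requires any computation.

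The only substantive point is (iii), the consistent phase property, and this is supplied directly by Theorem~\ref{thm: phase consistency property}: since the Fourier--Bohr coefficients of $\mu$ exist uniformly, that theorem gives $\widehat{\gamma_{\mu}}(\{\chi\}) = |c_\chi(\mu)|^2$ for every $\chi \in \widehat{G}$, which is exactly \eqref{eqn CPP}. With all three conditions of Theorem~\ref{thm Bap char} verified, the conclusion $\mu \in \Bap_{\cA}(G)$ follows immediately. I do not expect any real obstacle here; the lemma is a clean corollary obtained by feeding the phase-consistency result into the $\Bap$ characterization, and the only care needed is to note that uniform existence is what unlocks the CPP step rather than mere existence.
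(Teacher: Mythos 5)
Your proof is correct and follows essentially the same route as the paper: the paper's proof likewise invokes Theorem~\ref{thm: phase consistency property} to obtain the consistent phase property from the uniform existence of the Fourier--Bohr coefficients, and then concludes $\mu \in \Bap_{\cA}(G)$ via the characterization from \cite{LSS} (Theorem~\ref{thm Bap char}). Your write-up merely makes explicit the verification of the three conditions that the paper leaves implicit.
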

\begin{proof}
By Thm.~\ref{thm: phase consistency property} the (CPP) holds. Then, $\mu \in \Bap_{\cA}(G)$ by \cite[Thm.3.36]{LSS}.
\end{proof}

%%%%%%%%%%%%%%%%%%
%%%Stopped here.

\section{Orthogonality to Besicovitch almost periodic measures}\label{sec 7}

In this section, by using some recent results from \cite{LSS}, we can give some results similar to the ones in Section~\ref{sec 6}. Our proofs rely on the following result.

\begin{theorem}\cite[Prop.~3.26]{LSS3} Let $\cA$ be a van Hove sequence.
\begin{itemize}
  \item[(a)] Let $f,g \in \Cu(G)$ be so that $g \in \bap_{\cA}(G)$. If $a_\chi^\cA(f)$ exist for all $\chi$ and $\ltebe f,g \rtebeA$ exist then $\ltebe f,g \rtebeA \in SAP(G)$ and
  \[
  a_\chi^{\cA}(\ltebe f,g \rtebeA)= a_\chi^\cA(f) \overline{ a_\chi^\cA(g)} \qquad \forall \chi \in \widehat{G} \,.
  \]
  \item[(b)]Let $\mu, \nu \in \cM^\infty(G)$ be so that $\mu \in \Bap_{\cA}(G)$. If $a_\chi^\cA(\mu)$ exist for all $\chi$ and $\ltebe \mu,\nu \rtebeA$ exist then $\ltebe \mu , \nu \rtebeA \in \SAP(G)$ and
  \[
  a_\chi^{\cA}(\ltebe \mu, \nu  \rtebeA)= a_\chi^\cA(\mu) \overline{ a_\chi^\cA(\nu)} \qquad \forall \chi \in \widehat{G} \,.
  \]
\end{itemize}\qed
\end{theorem}

As consequences we get (compare \cite[Thm.~4.5]{BS}).

\begin{coro}\label{prop 6.9} Let $\cA$ be a van Hove sequence in $G$ and $\mu \in \Bap_{\cA}(G)\cap \cM^\infty(G)$ and let $\nu$ be any measure such that for all $\chi \in \FB_{\cA}(\mu)$ the Fourier Bohr coefficient $a_{\chi}^\cA(\nu)$ exists and $a_{\chi}^\cA(\nu)=0$. Then, the following twisted Eberlein convolutions exist and
\[
\ltebe \mu , \nu \rtebeA =\ltebe \nu , \mu \rtebeA =0 \,.
\]
In particular, \eqref{eq1111111} holds.
\end{coro}

Note that Cor.~\ref{prop 6.9} together with Lemma~\ref{lem3} can be used to give an alternate proof to Cor~\ref{cor 2}.

\section{The generalized Eberlein decomposition}

Here we discuss the existence of the so called generalized Eberlein decomposition, the known progress in this direction as well as the importance of any such result for diffraction theory.
We start with the following definition.

\begin{definition} Let $\omega \in \cM^\infty(G)$ and $\cA$ a van Hove sequence. We say that $\omega$ admits a \textbf{generalized Eberlein decomposition with respect to $\cA$} if there exist some measures
$\omega_{\mathsf{s}} , \omega_0 \in \cM^\infty(G)$ such that
\[
\omega=\omega_{\mathsf{s}}+\omega_0 \,.
\]
\begin{itemize}
  \item[(a)] We have $\omega_{\mathsf{s}} \in \Bap_{\cA}(G)$.
  \item[(b)] The twisted Eberlein convolution $\ltebe \omega_{\mathsf{s}}, \omega_0 \rtebeA$ exists and $\ltebe \omega_{\mathsf{s}}, \omega_0 \rtebeA=0$.
  \item[(c)] The autocorrelation $\Psi$ of $\omega_0$ exists with respect to $\cA$ and $\widehat{\Psi}$ is purely continuous measure.
\end{itemize}

If the measures $\omega_{\mathsf{s}}, \omega_0$ can be further chosen to have uniform Fourier--Bohr spectrum, then we say that $\omega$ admits a \textbf{uniform generalized Eberlein decomposition with respect to $\cA$}.
\end{definition}

\begin{remark} 
\begin{itemize}
  \item[(a)] If $\omega$ admits a uniform generalized Eberlein decomposition with respect to $\cA$, then the Fourier--Bohr coefficients $a_{\chi}^\cA(\omega), a_{\chi}^\cA(\omega_{\mathsf{s}}),a_{\chi}^\cA(\omega_0)$ exist uniformly. In particular, $\omega, \omega_{\mathsf{s}}, \omega_0$ satisfy the (CPP) and, for all $\chi \in \widehat{G}$ we have
\[
  a_{\chi}(\omega) =a_\chi(\omega_{\mathsf{s}}) \quad ; \quad   a_{\chi}(\omega_0) =0 \,.
\]
  \item[(b)] If $\omega$ admits a uniform generalized Eberlein decomposition with respect to $\cA$, one can further ask if there exists a generalized Eberlein decomposition 
  such that $\omega_{\mathsf{s}} \in \Wap(G)$. By \cite[Thm.~4.15]{LSS}, this is equivalent to the decomposition $\omega=\omega_{\mathsf{s}}+\omega_0$ being a generalized Eberlein decomposition with respect to all van Hove sequences.
\end{itemize}

\end{remark}

Let us start by looking at a situation where the uniform generalized Eberlein decomposition exists (see \cite{BS} for definitions and notations):

\begin{theorem}\cite[Thm.~5.1]{BS}  Let $\Lambda= \dot\bigcup_{1\leqslant i \leqslant N}\, \Lambda_i$ be a
  typed point set generated from of a primitive PV inflation rule in
  one dimension that is aperiodic and has a PV unit as inflation
  factor, and consider the corresponding natural CPS\/
  $(\RR, \RR^m , \cL)$ that emerges via the classic Minkowski
  embedding of the module spanned by
  the points.  Let\/ $W_i$ be the attractors of the induced,
  contractive iterated function system for the windows in internal
  space, and set
\[
  \alpha_i := \frac{\dens(\Lambda^{}_{i})}{\dens (\cL)
    \vol(W^{}_{ i})} \, , \quad
  (\delta_{\Lambda_i})_{\mathsf{s}} \, :=\, \alpha_i \, \delta_{\oplam(W_i)}
    \, , \quad \text{and} \quad
    (\delta_{\Lambda_i})_0 \, := \, \delta^{}_{\Lambda_i}   -  (\delta_{\Lambda_i})_{\mathsf{s}} \,.
\]
Then,
\[
\delta^{}_{\Lambda_i}= (\delta_{\Lambda_i})_{\mathsf{s}}+(\delta_{\Lambda_i})_0
\]
is an uniform generalized Eberlein decomposition with respect to any symmetric van Hove sequence. Moreover, for all $1 \leq i,j \leq N$ we have
\[
\ltebe \left(\delta_{\Lambda_i}\right)_{s} , \left(\delta_{\Lambda_j}\right)_0 \rtebeA =0 \,.
\]\qed
\end{theorem}

Let us next state various consequences of the existence of the (uniform) generalized Eberlein decomposition.

\begin{lemma} Let $\omega \in \cM^\infty(G)$ and $\cA$ a van Hove sequence be so that $\omega$ admits a generalized Eberlein decomposition with respect to $\cA$. Then,
\begin{itemize}
  \item[(a)] The autocorrelations $\eta, \Psi$ of $\omega_{\mathsf{s}}, \omega_0$ exist with respect to $\cA$.
  \item[(b)] $  \eta=\gamma_{\mathsf{s}} \,;\, \Psi = \gamma_0 \,.$
  \item[(c)]$  \widehat{\eta}=\left(\widehat{\gamma}\right)_{\mathsf{pp}} \,;\,   \widehat{\Psi}=\left(\widehat{\gamma}\right)_{\mathsf{c}}$. 
\end{itemize}
\end{lemma}
\begin{proof}
\textbf{(a)}
Since $\omega_{\mathsf{s}} \in \Bap_{\cA}(G)$, its autocorrelation $\eta$ exists with respect to $\cA$ and $\eta \in \SAP(G)$ \cite[Thm.~3.36]{LSS}.

\textbf{(b),(c)} follow from \cite[Thm.~4.10.10 and Thm.~4.10.12]{MoSt}.

\end{proof}

Next, we give an equivalent definition for the uniform generalized Eberlein decomposition, which is an immediate consequence of Thm.~\ref{thm: phase consistency property}.

\begin{prop}\label{prop4} Let $\omega, \mu, \nu \in \cM^\infty(G)$ be so that the Fourier--Bohr coefficients of $\omega$ exist uniformly. Then
\[
\omega_{\mathsf{s}}=\mu \,;\, \omega_0= \nu
\]
is a uniform generalized Eberlein decomposition with respect to $\cA$ if and only if
\begin{itemize}
  \item[(a)] $\mu \in \Bap_{\cA}(G)$.
  \item[(b)] The autocorrelation $\Psi$ of $\omega_0$ exists with respect to $\cA$.
  \item[(c)] The Fourier--Bohr spectrum of $\nu$ is uniformly null.
\end{itemize}\qed
\end{prop}

\begin{remark} Prop.~\ref{prop4} suggests that one may also ask for the existence of a "weak" generalized Eberlein decomposition
\[
\omega= \omega_{\mathsf{s}} +\omega_0
\]
such that
\begin{itemize}
  \item{} $\omega_{\mathsf{s}} \in \Bap_{\cA}(G)$.
  \item{} $\omega_0$ has null Fourier--Bohr spectrum with respect to $\cA$.
\end{itemize}
Note that this implies the orthogonality condition
\[
\ltebe \omega_{s}, \omega_0 \rtebeA =0
\]
together with
\[
\gamma_{\omega}=\gamma_{\omega_{\mathsf{s}}}+\gamma_{\omega_0} \,;\, \reallywidehat{\gamma_{\omega}}=\reallywidehat{\gamma_{\omega_{\mathsf{s}}}}+\reallywidehat{\gamma_{\omega_0}}\,.
\]
While $\reallywidehat{\gamma_{\omega_{\mathsf{s}}}}$ is a pure point measure, there is no guarantee that $\reallywidehat{\gamma_{\omega_0}}$ is continuous, making this type of decomposition
less interesting for the study of diffraction spectra. In fact, if $\reallywidehat{\gamma_{\omega_0}}$ is a continuous measure, then CPP holds and the above decomposition is a generalized Eberlein decomposition, which emphasizes that there is no need
to introduce this new concept if one is only interested in the diffraction spectra. For this reason, we do not look at this situation here.
\end{remark}

\subsection{Dynamical systems of translation bounded measures}

Let us start by stating the following consequence of our results to dynamical systems of translation bounded measures (see \cite{BL} for definition).

\begin{theorem}\label{thm main 2} Let $(\XX, G, m)$ and $(\YY, G, n)$ be two ergodic dynamical systems of translation bounded measures, and let $\gamma$ and $\eta$ be the autocorrelations of $(\XX,m)$ and $(\YY,n)$, respectively. Let $\cA$ be any van Hove sequence along which the ergodic theorem holds. If $\widehat{\gamma}$ is a pure point measure and  $\widehat{\eta}$ is a continuous measure, then, there exists sets $X \subseteq \XX, Y \subseteq \YY$ with the following properties:
\begin{itemize}
  \item[(a)] The sets $X,Y$ satisfy $m(X)=n(Y)=1$.
  \item[(b)] For all $\mu \in X$ the autocorrelation of $\mu$ with respect to $\cA$ is $\gamma$.
  \item[(c)] For all $\nu \in Y$ the autocorrelation of $\nu$ with respect to $\cA$ is $\eta$.
  \item[(d)] For all $\chi \in \mathbb{B}:= \{ \psi \in \widehat{G} : \widehat{\gamma}(\{ \chi \}) \neq 0 \}$ and all $\nu \in Y$, the Fourier--Bohr coefficient
  $a_{\chi}^\cA(\nu)$ exists and 
  \[
  a_{\chi}^\cA(\nu)=0 \,.
  \]
  \item[(e)] $X \subseteq \Bap_{\cA}(G)$.
  \item[(f)] For every $\mu \in X$ and $\nu \in Y$ the following twisted Eberlein convolutions exist and
\[
\ltebe \mu, \nu \rtebeA=\ltebe \nu, \mu \rtebeA=0 \,.
\]
\item[(g)] For all $\mu \in X,\nu \in Y$ and $a,b \in \CC$  the autocorrelation $\gamma_{\omega}$ of $\omega=a\mu+b\nu$ exists with respect to $\cA$ and
\begin{align*}
  \left(\widehat{\gamma_{\omega}}\right)_{\mathsf{pp}} & = |a|^2 \widehat{\gamma} \\
  \left(\widehat{\gamma_{\omega}}\right)_{\mathsf{c}} & = |b|^2 \widehat{\eta} \,.
\end{align*}
\end{itemize}
\end{theorem}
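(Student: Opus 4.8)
The plan is to extract two full-measure sets from the ergodic theory of the individual systems and then feed $m$-a.e./$n$-a.e.\ pairs into Proposition~\ref{prop 6.9}. Write $B=\{\chi\in\widehat{G}:\widehat{\gamma}(\{\chi\})\neq 0\}$ for the set of Bragg positions of $(\XX,m)$; since $\widehat{\gamma}$ is pure point, $B$ is countable. First I would invoke \cite[Thm.~5]{BL} to obtain $X_1\subseteq\XX$ and $Y_1\subseteq\YY$ with $m(X_1)=n(Y_1)=1$ such that the autocorrelation with respect to $\cA$ of every $\mu\in X_1$ equals $\gamma$ and of every $\nu\in Y_1$ equals $\eta$. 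This delivers (b), (c), and the existence of the autocorrelations along $\cA$ itself (not merely along a subsequence).

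For (d) I would put $X:=X_1\cap\Bap_{\cA}(G)$ and argue that $m(X)=1$. Since $\widehat{\gamma}$ is pure point, $(\XX,G,m)$ has pure point dynamical spectrum, and I would verify the three conditions of Theorem~\ref{thm Bap char} for $m$-a.e.\ $\mu$: the autocorrelation is $\gamma$ (pure point) on $X_1$; the Fourier--Bohr coefficients $c_\chi^{\cA}(\mu)$ exist for every $\chi$; and the consistent phase property \eqref{eqn CPP} holds. The latter two come from a pointwise ergodic theorem applied to the modulated averages $\frac1{|A_n|}\int_{A_n}\overline{\chi(t)}\,(\varphi*\mu)(t)\,\dd t$ (transferred to $\mu$ via Proposition~\ref{lem:FB rel}): the spectral measure of $f_\varphi$ is $\widehat{\gamma}\,|\widehat{\varphi}|^2$, so its atoms sit exactly at the $\chi\in B$ while every other character contributes $0$; here ergodicity forces the eigenfunctions to have constant modulus, which is precisely what makes \eqref{eqn CPP} hold a.e. Once $\mu\in\Bap_{\cA}(G)$, Theorem~\ref{thm Bap char} also gives $\FB_{\cA}(\mu)=B$, the only feature of $\mu$ needed below.

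For (e) I would set $Y:=Y_1\cap\{\nu:c_\chi^{\cA}(\nu)=0\ \text{for all }\chi\in B\}$. The decisive simplification is that $B$ is countable, so only countably many characters must be controlled and no uniform-in-$\chi$ statement is required. Fixing $\chi\in B$ and $\varphi\in\Cc(G)$ with $\widehat{\varphi}(\chi)\neq 0$, the ergodic theorem along $\cA$ applied to the $\overline{\chi}$-modulated average of $f_\varphi$ has limit equal to the projection of $f_\varphi$ onto the $\chi$-eigenspace; as $\widehat{\eta}$ is continuous, the spectral measure $\widehat{\eta}\,|\widehat{\varphi}|^2$ has no atom at $\chi$ (in particular the mean vanishes at $\chi=0$), so the limit is $0$ for $n$-a.e.\ $\nu$, and Proposition~\ref{lem:FB rel} yields $c_\chi^{\cA}(\nu)=0$. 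Intersecting these countably many full-measure sets keeps $n(Y)=1$. Then for every $\mu\in X$ and $\nu\in Y$ we have $\mu\in\Bap_{\cA}(G)$ with $\FB_{\cA}(\mu)=B$ and $c_\chi^{\cA}(\nu)=0$ on $B$, so Proposition~\ref{prop 6.9} gives $\mu\circledast_{\cA}\tilde{\nu}=\nu\circledast_{\cA}\tilde{\mu}=0$, which is (e).

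Statement (f) is then immediate from \eqref{aut lc}: along $\cA$ all four Eberlein convolutions exist, the two cross terms vanish by (e), and hence $\gamma_{\omega}=|a|^2\gamma+|b|^2\eta$; taking Fourier transforms and using uniqueness of the Lebesgue decomposition (with $\widehat{\gamma}$ pure point and $\widehat{\eta}$ continuous) gives the two displayed formulas. The step I expect to be the main obstacle is (d): producing, on a single full-measure set, the Fourier--Bohr coefficients $c_\chi^{\cA}(\mu)$ for \emph{all} $\chi$ simultaneously (an uncountable family) together with CPP. For the countable set $B$ the plain Birkhoff theorem suffices, but the remaining characters require either a Wiener--Wintner type pointwise result along $\cA$ or the dynamical equivalence between pure point diffraction and (Besicovitch/mean) almost periodicity from \cite{LSS,LSS2}. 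The $\YY$-side in (e) sidesteps this difficulty entirely, since only the countably many $\chi\in B$ are relevant there.
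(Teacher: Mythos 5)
Your proposal is correct and follows essentially the same route as the paper: the paper likewise takes $X=X_1\cap\Bap_{\cA}(G)$ (full measure by \cite[Cor.~6.11]{LSS} or \cite[Thm.~3.7]{LSS2}), uses Thm.~\ref{thm Bap char} to identify $\FB_{\cA}(\mu)=B$, intersects $Y_1$ with the countably many full-measure sets $Y_\chi=\{\nu: c_\chi^{\cA}(\nu)=0\}$, $\chi\in B$ (obtained from \cite[Thm.~5]{Len}), and then concludes (e) from Prop.~\ref{prop 6.9} together with Lemma~\ref{lem 1}(e) and (f) from \eqref{aut lc}. The only difference is that where you sketch ergodic/spectral arguments for the two a.e.\ ingredients (Besicovitch almost periodicity of a.e.\ $\mu$, and vanishing of $c_\chi^{\cA}(\nu)$ for a.e.\ $\nu$ at each fixed $\chi\in B$), the paper simply cites the results above, exactly as you anticipated in your closing remarks.
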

\begin{proof}
By \cite[Thm.~5]{BL}, there exists sets $X_1 \subseteq \XX, Y_1 \subseteq \YY$ of full measure such that, for all $\mu \in X_1$, the autocorrelation of $\mu$ with respect to $\cA$ is $\gamma$ and for all $\nu \in Y_1$, the autocorrelation of $\nu$ with respect to $\cA$ is $\eta$. Next, since $(\XX,m, G)$ has pure point diffraction spectrum, the set $X_2= \XX \cap \Bap_{\cA}(G)$ has full measure in $\XX$ by \cite[Cor.~6.11]{LSS} or \cite[Thm.~3.7]{LSS2}. Therefore
\[
X:= X_1 \cap X_2= X_1 \cap \Bap_{\cA}(G)
\] has full measure in $\XX$.

Now, for each $\chi \in \widehat{G}$, by \cite[Thm.~5]{Len} there exists a set $Y_\chi \in \YY$ of full measure such that $a_\chi^{\cA}(\nu)=0$ for all $\nu \in Y_\chi$. We would like to chose $Y_2:= \cap_{\chi \in \widehat{G}} Y_{\chi}$, but we do not know that this has full measure. We get around this issue by restricting to  $\mathbb{B}$ which is a countable set.

Let
\[
  Y := Y_1 \cap (\cap_{\chi \in \mathbb{B}} Y_{\chi}) \,.
\]
The definition of $Y$ implies that for all $\chi \in B$ and $\nu \in Y$ the Fourier--Bohr coefficient $a_\chi^{\cA}(\nu)$ exists and
\[
a_\chi^{\cA}(\nu) =0 \,.
\]
Moreover, for all $\mu \in X$ we have by \cite[Thm.~3.36]{LSS}
\[
\FB_{\cA}(\mu)=\mathbb{B} \,.
\]
Then \textbf{ (a)-(e)} hold and \textbf{(f)-(g)} follow from Cor.~\ref{prop 6.9}.

\end{proof}

\medskip
Under the assumptions of Thm.~\ref{thm main 2} , if furthermore $(\XX, G)$ is uniquely ergodic then $\gamma$ is the autocorrelation of all
$\omega \in \XX$ \cite[Thm.~5]{BL}. Moreover, the continuity of the eigenfunctions imply in this case that the CPP holds \cite[Thm.~5]{Len}.
In fact, in this case, all measures in $\XX$ must be Weyl and hence Besicovitch almost periodic \cite[Thm.~6.15]{LSS2}.

Therefore, by repeating the proof of Thm.~\ref{thm main 2} we get:

\medskip

\begin{prop} Let $(\XX, G, m)$ and $(\YY, G, n)$ be two ergodic (TMDS), and let $\gamma$ and $\eta$ be the autocorrelations of $(\XX,m)$ and $(\YY,n)$, respectively. Let $\cA$ be any van Hove sequence along which the ergodic theorem holds.

Assume that $\widehat{\gamma}$ is a pure point measure, that $\widehat{\eta}$ is a continuous measure. If $\XX$ is uniquely ergodic and has continuous eigenfunctions, then, there exists a set $Y \subseteq \YY$ with the following properties:
\begin{itemize}
  \item[(a)] The set $Y$ satisfies $n(Y)=1$.
  \item[(b)] For all $\mu \in \XX$ the autocorrelation of $\mu$ with respect to $\cA$ is $\gamma$.
  \item[(c)] For all $\nu \in Y$ the autocorrelation of $\nu$ with respect to $\cA$ is $\eta$.
  \item[(d)] For every $\mu \in \XX$ and $\nu \in Y$ we have
\[
\ltebe \mu, \nu \rtebeA=\ltebe \mu, \nu \rtebeA=0 \,.
\]
\item[(e)] For all $\mu \in \XX,\nu \in Y$ and $a,b \in \CC$  the autocorrelation $\gamma_{\omega}$ of $\omega=a\mu+b\nu$ exists with respect to $\cA$ and
\begin{align*}
  \left(\widehat{\gamma_{\omega}}\right)_{\mathsf{pp}} & = |a|^2 \widehat{\gamma} \\
  \left(\widehat{\gamma_{\omega}}\right)_{\mathsf{c}} & = |b|^2 \widehat{\eta} \,.
\end{align*}
\end{itemize}
\qed
\end{prop}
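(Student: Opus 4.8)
The plan is to transcribe the proof of Thm.~\ref{thm main 2}, using unique ergodicity and the continuity of eigenfunctions to upgrade every ``$m$-almost all $\mu \in \XX$'' statement to an ``all $\mu \in \XX$'' statement. First I would record the three structural facts about $\XX$ supplied by these hypotheses, all noted in the remark preceding the statement. By \cite[Thm.~5]{BL}, $\gamma$ is the autocorrelation with respect to $\cA$ of \emph{every} $\omega \in \XX$, which is exactly assertion (b). By \cite[Thm.~6.15]{LSS2}, every element of $\XX$ is Weyl, hence Besicovitch, almost periodic, so $\XX \subseteq \Bap_{\cA}(G)$. Consequently, for each $\mu \in \XX$ the ``in particular'' clause of Thm.~\ref{thm Bap char} applies and yields $\FB_{\cA}(\mu)=B$, where $B := \{\chi \in \widehat{G} : \widehat{\gamma}(\{\chi\}) \neq 0\}$. (The CPP for $\XX$, which \cite[Thm.~5]{Len} also provides, is subsumed in this.)

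The $\YY$-side is handled exactly as in Thm.~\ref{thm main 2}. I would take the full-measure set $Y_1 \subseteq \YY$ from \cite[Thm.~5]{BL} on which the autocorrelation with respect to $\cA$ equals $\eta$, giving (c), and for each fixed $\chi$ the full-measure set $Y_\chi \subseteq \YY$ from \cite[Thm.~5]{Len} on which $c_\chi^{\cA}(\nu)=0$. Since $\widehat{G}$ is second countable the Bragg set $B$ is \emph{countable}, so $Y := Y_1 \cap \bigcap_{\chi \in B} Y_\chi$ is a countable intersection of full-measure sets and satisfies $n(Y)=1$; together with $m(\XX)=1$ this gives (a). By construction $c_\chi^{\cA}(\nu)=0$ for every $\nu \in Y$ and every $\chi \in B$.

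With these sets in hand, assertion (d) is immediate. Fixing $\mu \in \XX$ and $\nu \in Y$, we have $\mu \in \Bap_{\cA}(G)$ and $c_\chi^{\cA}(\nu)=0$ for every $\chi \in \FB_{\cA}(\mu)=B$, so Prop.~\ref{prop 6.9} applies and yields $\mu \circledast_{\cA} \tilde{\nu}=\nu \circledast_{\cA} \tilde{\mu}=0$ at once (one direction also following from the other via Lemma~\ref{lem 1}(e)). For (e) I would substitute these two vanishing cross terms into \eqref{aut lc}; since $\gamma_\mu=\gamma$ and $\gamma_\nu=\eta$ both exist along $\cA$ itself, all four terms exist along $\cA$ and $\gamma_\omega = |a|^2\gamma + |b|^2\eta$. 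As $\widehat{\gamma}$ is pure point and $\widehat{\eta}$ is continuous, uniqueness of the Lebesgue decomposition of $\widehat{\gamma_\omega}$ identifies $(\widehat{\gamma_\omega})_{pp}=|a|^2\widehat{\gamma}$ and $(\widehat{\gamma_\omega})_{c}=|b|^2\widehat{\eta}$.

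The only genuine obstacle is the measure-theoretic point in the second paragraph: one cannot intersect the $Y_\chi$ over all of $\widehat{G}$, since an uncountable intersection of full-measure sets need not have full measure. The whole argument therefore hinges on the identity $\FB_{\cA}(\mu)=B$ from Thm.~\ref{thm Bap char}, which restricts the relevant frequencies to the countable set $B$. Everything else is a direct transcription of Thm.~\ref{thm main 2}, with $X=\XX$ replacing the almost-sure set.
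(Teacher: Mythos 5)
Your proposal is correct and follows essentially the same route as the paper: the paper proves this proposition precisely by invoking unique ergodicity (via \cite[Thm.~5]{BL}) and continuity of eigenfunctions (via \cite[Thm.~5]{Len} and \cite[Thm.~6.15]{LSS2}) to upgrade the almost-sure statements on $\XX$ to statements for all of $\XX$, and then repeating the proof of Thm.~\ref{thm main 2} verbatim with $X=\XX$. Your identification of the countability of $B$ (hence of $\FB_{\cA}(\mu)$) as the crucial measure-theoretic point, and your use of Prop.~\ref{prop 6.9}, Lemma~\ref{lem 1}(e) and \eqref{aut lc} for parts (d) and (e), match the paper's argument exactly.
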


\medskip

Now, by combining Thm.~\ref{thm main 2} with \cite[Thm.~4.1]{JBA},  we can prove that for an ergodic dynamical system of translation bounded measures the generalized Eberlein decomposition exist almost surely.

\begin{theorem}\label{thm last} Let $(\XX, G, m)$ be an ergodic dynamical system of translation bounded measures with autocorrelation $\gamma$, and $\cA$ a van Hove sequence along which the ergodic theorem holds, Then, there exists two two ergodic dynamical systems of translation bounded measures $(\XX_{\mathsf{pp}}, G , m_{\mathsf{pp}}), (\XX_{\mathsf{c}}, G , m_{\mathsf{c}})$, two Borel factor mappings
\begin{align*}
  \pi_{\mathsf{pp}} &: \XX \to \XX_{\mathsf{pp}} \\
  \pi_{\mathsf{c}} &: \XX \to \XX_{\mathsf{c}} \\
\end{align*}
and some set $\XX' \subseteq \XX$ of full measure such that
\begin{itemize}
  \item[(a)] The diffraction of $(\XX_{\mathsf{pp}}, G , m_{\mathsf{pp}})$ is $(\widehat{\gamma})_{\mathsf{pp}}$.
  \item[(b)] The diffraction of $(\XX_{\mathsf{c}}, G , m_{\mathsf{c}})$ is $(\widehat{\gamma})_{\mathsf{c}}$.
  \item[(c)] For all $\omega \in \XX'$
  \[
  \omega=\pi_{\mathsf{pp}}(\omega)+\pi_{\mathsf{c}}(\omega) \,.
  \]
 is a generalized Eberlein decomposition of $\omega$.
 \end{itemize}
\end{theorem}
\begin{proof}
By \cite[Thm.~4.1]{JBA} there exists some there exists two ergodic dynamical systems of translation bounded measures $(\XX_{\mathsf{pp}}, G , m_{\mathsf{pp}}), (\XX_{\mathsf{c}}, G , m_{\mathsf{c}})$, two Borel factor mappings
\begin{align*}
  \pi_{\mathsf{pp}} &: \XX \to \XX_{\mathsf{pp}} \\
  \pi_{\mathsf{c}} &: \XX \to \XX_{\mathsf{c}} \\
\end{align*}
and a set $Z \subseteq \XX$ of full measure such that
\begin{itemize}
  \item{}The diffraction of $(\XX_{\mathsf{pp}}, G , m_{\mathsf{pp}})$ is $(\widehat{\gamma})_{\mathsf{pp}}$.
  \item{}The diffraction of $(\XX_{\mathsf{c}}, G , m_{\mathsf{c}})$ is $(\widehat{\gamma})_{\mathsf{c}}$.
  \item{} For all $\omega \in Z$ we have
\[
\omega=\pi_{\mathsf{pp}}(\omega) + \pi_{\mathsf{c}}(\omega) \,.
\]
\end{itemize}

Next, let $X \subseteq \XX_{\mathsf{pp}}$ and $Y \subseteq \YY_{\mathsf{pp}}$ be the sets of full measure given by Thm.~\ref{thm main 2}. Since $\pi_{\mathsf{pp}}$ and $\pi_{\mathsf{c}}$ are Borel factor maps, the set $\pi_{\mathsf{pp}}^{-1}(X), \pi_{\mathsf{c}}^{-1}(Y)$ have full measure in $\XX$ and hence so does
\[
\XX' = Z \cap \left(\pi_{\mathsf{pp}}^{-1}(X)\right) \cap \left(\pi_{\mathsf{c}}^{-1}(Y) \right) \,.
\]

The claims follow immediately.
\end{proof}

\subsection*{Acknowledgments} We are grateful to Michael Baake, Daniel Lenz and Timo Spindeler for many insightful discussions which inspired this manuscript. The work was supported by NSERC with grant  2020-00038. We are greatly thankful for all the support.


\begin{thebibliography}{99}


\bibitem{ARMA1} L.~N ~Argabright and J. ~Gil ~de ~Lamadrid,  Fourier analysis of
unbounded measures on locally compact abelian groups, \textit{Memoirs Amer. Math. Soc.}, \textbf{145}, AMS, Providence, RI  (1974).


\bibitem{JBA}
J.-B.~Aujogue,
Pure point/continuous decomposition of translation-bounded measures
and diffraction,
\textit{Ergod.\ Th.\ \& Dynam.\ Syst.} \textbf{40} (2020) 309--352;
\texttt{arXiv:1510.06381}.



\bibitem{BFG}
M.~Baake, N.P.~Frank and U.~Grimm,
Three variations on a theme by Fibonacci,
\textit{Stoch.\ Dyn.} \textbf{21} (2021) 2140001:1--23;
\texttt{arXiv:1910.00988}.

\bibitem{BFGR}
M.~Baake, N.P.~Frank, U.~Grimm and E.A.~Robinson,
Geometric properties of a binary non-Pisot inflationand absence of absolutely continuous diffraction,
\textit{Studia Mathematica} \textbf{247} (2019), 109--154;
\texttt{arXiv:1706.03976}.

\bibitem{BaGa}
M.~Baake and F.~G\"{a}hler,
Pair correlations of aperiodic inflation rules via renormalisation: Some interesting examples,
\textit{Topol. \& Appl.} \textbf{205} (2016) 4--27;
\texttt{arXiv:1511.00885}.

\bibitem{BGM}
M.~Baake, F.~G\"{a}hler and N.~Ma\~{n}ibo,
Renormalisation of pair correlation measures for primitive
inflation rules and absence of absolutely continuous diffraction,
\textit{Commun.\ Math.\ Phys.} \textbf{370} (2019) 591--635;
\texttt{arXiv:1805.09650}.


\bibitem{TAO}
M.~Baake and U.~Grimm,
\textit{Aperiodic Order. Vol. 1: A Mathematical Invitation},
Cambridge University Press, Cambridge (2013).

\bibitem{TAO2}
M.~Baake and U.~Grimm (eds.),
\textit{Aperiodic Order. Vol. 2: Crystallography and
Almost Periodicity},
Cambridge University Press, Cambridge (2017).

\bibitem{BG}
M.~ Baake and U.~ Grimm,
Fourier transform of Rauzy fractals and point spectrum of
1D Pisot inflation tilings,
\textit{Docum. Math.} \textbf{25} (2020) 2303--2337;
\texttt{arXiv:1907.11012}.

\bibitem{BGrM}
M.~ Baake, U.~ Grimm and N.~Ma\~{n}ibo,
Spectral analysis of a family of binary inflation rules,
Lett. Math. Phys. 108 (2018) 1783--1805;
\texttt{arXiv:1709.09083}.

\bibitem{BL}
M.~Baake and D.~Lenz,
Dynamical systems on translation bounded
measures:\ Pure point dynamical and diffraction spectra,
\textit{Ergod.\ Th.\ \& Dynam.\ Syst.} \textbf{24} (2004) 1867--1893;
\newline
\texttt{arXiv:math.DS/0302231}.


\bibitem{BM}
M.~Baake and R.V.~Moody,
{Weighted Dirac combs with pure point diffraction},
\textit{J.\ Reine Angew.\ Math.\ (Crelle)}
\textbf{573} (2004) 61--94;
\texttt{arXiv:math.MG/0203030}.

\bibitem{BSS}
M.~Baake, T.~Spindeler and N.~Strungaru,
Diffraction of compatible random substitutions in one dimension,
\textit{Indag.\ Math.} \textbf{29} (2018) 1031--1071;
\texttt{arXiv:1712.00323}.


\bibitem{BS}
M. Baake and N. Strungaru, Eberlein decomposition for PV inflation systems, \emph{Lett. Math. Phys.} \textbf{111}(2021), 21 pp.;
\texttt{arXiv:2005.06888 }


\bibitem{BF}
C. ~Berg and G. ~Forst, \textit{ Potential Theory on Locally Compact
Abelian Groups}, Springer, Berlin (1975).


\bibitem{EBE}
W.F. ~Eberlein, Abstract ergodic theorems and weak almost
periodic functions, \textit{Trans. Amer. Math. Soc.}, \textbf{67}(1949) 217--24.

%\bibitem{Feb2}
%W. F. Eberlein, A note on Fourier--Stieltjes transforms,
%\textit{Proc.\ Amer.\ Math.\ Soc.} \textbf{6} (1955), 310--312.

%\bibitem{FEB}
%W.F. Eberlein ,
%The point spectrum of weakly almost periodic functions,
%\textit{Michigan Math.\ J.} \textbf{3}(1955/56), 137--139.

\bibitem{Gou2}
J.-B. Gou\'{e}r\'{e}, Diffraction and Palm measure of
point processes,\textit{C. R. Acad. Sci. Paris} \textbf{336}(2003), 57--62;
\texttt{arXiv:math/0208064}.

\bibitem{Gou}
J.-B. Gou\'{e}r\'{e},Quasicrystals and almost
periodicity,  \textit{Commun. Math. Phys.} \textbf{255}(2005), 655--681;
\texttt{arXiv:math-ph/0212012}.


\bibitem{Hof2}
A.~Hof, Uniform distribution and the projection method, in \textit{ Quasicrystals and Discrete Geometry}, ed. J. Patera,
Fields Institute Monographs \textbf{10} (1988) AMS, Providence, RI, pp.~201--206.

\bibitem{HOF3} A. Hof, On diffraction by aperiodic structures,  \textit{Commun. Math. Phys.} \textbf{169} (1995) 25--43.


\bibitem{ARMA}
J.~Gil de Lamadrid and L.N.~Argabright,
{Almost periodic measures},
\textit{Memoirs Amer.\ Math.\ Soc.} \textbf{85}, no.~428,
AMS, Providence, RI (1990).


\bibitem{LMS}
J.-Y.\ Lee, R.~V.~Moody and  B.~Solomyak, Pure point
dynamical and diffraction spectra,  \textit{Ann. H.\ Poincar\'{e}} \textbf{ 3}
(2002), 1003--1018; \texttt{arxiv:0910.4809}.


\bibitem{Len}
D.~Lenz,
Continuity of eigenfunctions of uniquely ergodic dynamical
systems and intensity of Bragg peaks,
\textit{Commun.\ Math.\ Phys.} \textbf{287} (2009) 225--258;
\texttt{arXiv:math-ph/0608026}.

%\bibitem{LR}
%D.~Lenz and C.~Richard,
%Pure point diffraction and cut and project schemes for
%measures:\ The smooth case,
%\textit{Math.\ Z.} \textbf{256} (2007) 347--378;
%\texttt{arXiv:0603453}.

\bibitem{LSS}
D.~Lenz, T.~Spindeler and N.~Strungaru,
Pure point diffraction and mean, Besicovitch and Weyl almost periodicity,
\textit{preprint} (2020); \texttt{arXiv:2006.10821}.

\bibitem{LSS2}
D.~Lenz, T.~Spindeler and N.~Strungaru,
Pure point spectrum for dynamical systems and mean
almost periodicity, \textit{preprint} (2020);
\texttt{arXiv:2006.10825.}


\bibitem{LSS3}
D. Lenz, T. Spindeler and N. Strungaru, The (twisted) Eberlein convolution of measures, \textit{preprint} (2022);
\texttt{arXiv:2211.06969.}

\bibitem{LS}
D.~Lenz and N.~Strungaru,
{On weakly almost periodic measures},
\textit{Trans.\ Amer.\ Math.\ Soc.} \textbf{371} (2019) 6843--6881;
\texttt{arXiv:1609.08219}.

\bibitem{LS2}
D. Lenz and N. Strungaru, {Pure point spectrum for measurable
dynamical systems on locally compact Abelian groups},\textit{ J. Math. Pures Appl.} \textbf{92}(2009), 323--341;
\texttt{arXiv:0704.2498}.

\bibitem{Man}
N.~Ma\~{n}ibo,
Lyapunov exponents for binary substitutions of constant length,
\textit{J. Math. Phys.} \textbf{58}(2017) 113504:1-9;
\texttt{arXiv:1706.00451}.

%\bibitem{Meyer}
%Y.~Meyer, \textit{ Algebraic Numbers and  Harmonic Analysis},
%North-Holland, Amsterdam (1972).


\bibitem{Moll}
M.~Moll,
Diffraction of random noble means words,
\textit{J.\ Stat.\ Phys.} \textbf{156} (2014) 1221--1236;
\texttt{arXiv:1404.7411}.

\bibitem{MS}
R. V. Moody and N. Strungaru, { Point sets and dynamical
systems in the autocorrelation topology}, \textit{Canad. Math. Bull.}
\textbf{47}(2004), 82--99.


\bibitem{MoSt}
R.V.~Moody and N.~Strungaru,
{Almost periodic measures and their Fourier transforms},
in \cite{TAO2}, pp.~173--270.


\bibitem{Ped}
G.~K.~Pedersen, \textit{Analysis Now}, Springer, New York (1989);
Revised \ printing (1995).

\bibitem{rud2}
W.~Rudin, {\em Real and Complex Analysis}, McGraw-Hill, New York (1986).

\bibitem{RS}
D.~ Rust and T.~ Spindeler,
Dynamical systems arising from random substitutions,
\textit{Indag.\ Math.} \textbf{29} (2018) 1131--1155;
\texttt{arXiv:1707.09836}.


\bibitem{She}
D. Shechtman, I. Blech, D. Gratias and J.  W. Cahn,
Metallic phase with long-range orientational order
and no translation symmetry,
\textit{Phys.\ Rev.\ Lett.\ }\textbf{53}(1984) 183--185.



\bibitem{SOL2}
B. Solomyak, {Dynamics of self symilar tilings}, \textit{Ergod.\ Th.\ \& Dynam.\ Syst.} \textbf{17}(1997), 695--738.

\bibitem{SOL}
B.~Solomyak, {Spectrum of dynamical systems arising from Delone sets},
in  \textit{Quasicrystals and Discrete Geometry} (Toronto, ON, 1995), ed. J.~Patera, Fields Inst. Monogr., \textbf{10}, AMS, Providence, RI(1998), pp. 265--275.


\bibitem{Timo}
T.~Spindeler,
\textit{Spectral Theory of Compatible Random Inflation Systems},
PhD thesis, Univ.\ Bielefeld (2018);
\texttt{urn:nbn:de:0070-pub-29173839}.


%\bibitem{SS}
%T. Spindeler and N. Strungaru, On norm almost periodic measures, \textit{Math. Z.} \textbf{ 299}(2021), 233--255;
%\texttt{arXiv:1810.09490}

\bibitem{NS2}
N.~Strungaru, Almost periodic measures and long-range
order in Meyer sets, \textit{Discr. Comput. Geom.} \textbf{33}(2005), 483--505.


\bibitem{NS14}
N.~Strungaru,
On weighted Dirac combs supported inside model sets,
\textit{J.\ Phys.\ A:\ Math.\ Theor.} \textbf{47} (2014)
335202:1--19; \texttt{arXiv:1309.7947}.

\bibitem{NS11}
N.~Strungaru,
{Almost periodic pure point measures},
in \cite{TAO2}, pp.~271--342;
\texttt{arXiv:1501.00945}.

\bibitem{NS20}
N.~Strungaru,
{On the Fourier analysis of measures with Meyer set support},
\textit{J.\ Funct.\ Anal.} \textbf{278} (2020) 108404:1--30;
\texttt{arXiv:1807.03815}.

%\bibitem{NS20b}
%N.~Strungaru, On the Fourier transformability of strongly almost periodic measures, \textit{Canad. J. Math.} \textbf{72}(2020) 900--927.
%\texttt{arXiv:1704.04778}


\bibitem{NS21}
N.~Strungaru,
Why do Meyer sets diffract?,
{\it preprint} (2021);
\texttt{arXiv:2101.10513}.

\end{thebibliography}
\end{document}